\documentclass[11pt]{amsart}

\usepackage[T1]{fontenc}
\usepackage{lmodern}
\usepackage{microtype}
\usepackage{amsmath,amssymb,amsthm,mathtools}
\usepackage{enumitem}
\usepackage{geometry}
\usepackage{xcolor}
\usepackage{hyperref}
\usepackage{booktabs}
\usepackage{aliascnt}

\hypersetup{
  colorlinks=true,
  linkcolor=blue!55!black,
  citecolor=green!35!black,
  urlcolor=blue!60!black,
  pdftitle={A Resolution of Erd\H{o}s Problem 550 on Tree versus Complete Multipartite Ramsey Numbers},
  pdfauthor={Eric Li}
}

\newtheorem{theorem}{Theorem}[section]
\newaliascnt{lemma}{theorem}
\newtheorem{lemma}[lemma]{Lemma}
\aliascntresetthe{lemma}
\newaliascnt{proposition}{theorem}
\newtheorem{proposition}[proposition]{Proposition}
\aliascntresetthe{proposition}
\newaliascnt{corollary}{theorem}
\newtheorem{corollary}[corollary]{Corollary}
\aliascntresetthe{corollary}
\newaliascnt{claim}{theorem}

\aliascntresetthe{claim}
\theoremstyle{definition}
\newaliascnt{definition}{theorem}

\aliascntresetthe{definition}
\newaliascnt{remark}{theorem}

\aliascntresetthe{remark}

\usepackage[nameinlink,capitalise,noabbrev]{cleveref}

\newcommand{\N}{\mathbb N}
\newcommand{\eps}{\varepsilon}
\newcommand{\ind}{\mathbf 1}
\newcommand{\cC}{\mathcal C}
\newcommand{\cD}{\mathcal D}
\newcommand{\Gb}{G_{\mathrm b}}
\newcommand{\Gr}{G_{\mathrm r}}

\title[A resolution of Erd\H{o}s Problem 550]
{A Resolution of Erd\H{o}s Problem 550 on Tree versus Complete Multipartite Ramsey Numbers}
\author[E. Li]{Eric Li}
\date{June 23, 2026}
\thanks{\textit{Email addresses:}
\href{mailto:el593@cam.ac.uk}{el593@cam.ac.uk},
\href{mailto:contact@ericli.com}{contact@ericli.com}}

\makeatletter
\def\@setauthors{%
  \begingroup
  \def\thanks{\protect\thanks@warning}%
  \trivlist
  \centering\footnotesize \@topsep30\p@\relax
  \advance\@topsep by -\baselineskip
  \item\relax
  \author@andify\authors
  \def\\{\protect\linebreak}%
  \MakeUppercase{\authors}\par
  \vspace{6pt}%
  {\normalfont\normalsize Trinity College, University of Cambridge}%
  \endtrivlist
  \endgroup
}
\makeatother

\begin{document}

\begin{abstract}
We resolve Erd\H{o}s Problem~550, originally asked as question~(2) of
Erd\H{o}s, Faudree, Rousseau, and Schelp.  Precisely, for fixed
$k\ge2$ and $1\le m_1\le\cdots\le m_k$, we prove that, for every
sufficiently large $n$ and every $n$-vertex tree $T$,

\[
 R\!\left(T,K_{m_1,\ldots,m_k}\right)
 \le (k-1)\bigl(R(T,K_{m_1,m_2})-1\bigr)+m_1.
\]
The proof combines an off-Tur\'an tree-embedding theorem, proved by
regularity and whole-edge allocation, with a compactness theorem for
bounded-rank hypergraph obstructions.
The full and unconditional proof has been formally verified in Lean.
\end{abstract}

\subjclass[2020]{05C55, 05C35, 05C05, 05D10}
\keywords{Ramsey number, tree, complete multipartite graph, stability,
cooperative coloring, compactness}

\maketitle

\section{Introduction}

This paper resolves Erd\H{o}s Problem~550, and the full and unconditional proof
has been formally verified in Lean.\footnote{A complete machine-checked
formalisation of the entire proof is available in \cite{Erdos550Lean}; see also
the formal verification statement before the references.}

For graphs $J$ and $L$, let $R(J,L)$ be the least $N$ such that every
red--blue coloring of $K_N$ contains a red copy of $J$ or a blue copy of
$L$.  We use the symmetry $R(J,L)=R(L,J)$.  The graph
$K_{m_1,\ldots,m_k}$ is the complete $k$-partite graph with part sizes
$m_1,\ldots,m_k$.

Chv\'atal proved that
\[
 R(T,K_k)=(k-1)(|T|-1)+1
\]
for every tree $T$~\cite{Chvatal1977}.  More generally, Burr's canonical
construction gives, for every connected $n$-vertex graph $J$ with $n\ge2$
and every
graph $L$,
\[
 R(J,L)\ge (\chi(L)-1)(n-1)+\sigma(L),
\]
where $\sigma(L)$ is the smallest color-class size in a proper
$\chi(L)$-coloring of $L$~\cite{Burr1981}.  In particular, if
\[
 F=K_{m_1,\ldots,m_k},\qquad q=k-1,\qquad a=m_1,
\]
then
\begin{equation}\label{eq:burrlower}
 R(T,F)\ge q(n-1)+a.
\end{equation}
When $n>a$, the usual construction on $q(n-1)+a-1$ vertices consists of
$q$ red cliques of order $n-1$ and one red clique of order $a-1$, with all
crossing edges blue.  It contains no red $T$.  It also contains no blue
$F$: the host parts induce a proper $(q+1)$-coloring of any blue copy, and
in every proper $(q+1)$-coloring of the complete $(q+1)$-partite graph $F$,
each original part of $F$ is one color class, because the $q+1$ original
parts are nonempty and vertices in different parts are adjacent.  Thus some class of order at
least $a$ would have to lie in the host part of order $a-1$.  When
$2\le n\le a$, instead color every edge blue.  There is no red $T$, since
every tree of order at least two has an edge, while
\[
 q(n-1)+a-1\le(q+1)(a-1)<|F|,
\]
so there is no blue $F$ either.  This proves \Cref{eq:burrlower} in all
cases with $n\ge2$; the first case is the one relevant to the asymptotic
theorem.  For completeness, $R(K_1,F)=1$, so the displayed Burr bound is not
asserted when $n=1$.

Erd\H{o}s, Faudree, Rousseau, and Schelp proved the corresponding large-tree
result when the smallest part has order $1$, and asked whether, for arbitrary
fixed part sizes,
\begin{equation}\label{eq:EFQ}
 R\!\left(T,K_{m_1,\ldots,m_k}\right)
 \le (k-1)\bigl(R(T,K_{m_1,m_2})-1\bigr)+m_1
\end{equation}
for all sufficiently large trees; this is question~(2) of
\cite[p.~153]{EFRS1989} and is recorded as Erd\H{o}s Problem~550
\cite{ErdosProblems550}.  The main theorem below resolves this problem.

\begin{theorem}[Resolution of Erd\H{o}s Problem~550]\label{thm:main}
Fix an integer $k\ge2$ and integers
\[
 1\le m_1\le\cdots\le m_k.
\]
There exists $n_0=n_0(m_1,\ldots,m_k)$ such that, for every $n\ge n_0$ and
every $n$-vertex tree $T$,
\[
 R\!\left(T,K_{m_1,\ldots,m_k}\right)
 \le (k-1)\bigl(R(T,K_{m_1,m_2})-1\bigr)+m_1.
\]
\end{theorem}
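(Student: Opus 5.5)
Write $F=K_{m_1,\ldots,m_k}$, $q=k-1$, $a=m_1$, and $r=R(T,K_{m_1,m_2})$. Note $r\ge n-1+a$ by \Cref{eq:burrlower}; the target bound is $N:=q(r-1)+a$.

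Fix a red--blue coloring of $K_N$ with no red $T$. Let $\Gr$ and $\Gb$ be the red and blue graphs. We must find a blue $F$.

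The plan is a stability argument with two regimes, determined by the structure of $\Gr$.

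\textbf{Step 1 (off-Tur\'an embedding).} First I would prove the following: if $\Gr$ is $\eps$-far, in edit distance, from a disjoint union of at most $q+1$ red near-cliques each of order at most about $n$, then $\Gr$ contains every $n$-vertex tree. Since $T$ is assumed absent, the coloring must then be close to the extremal structure.

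The tool is Szemer\'edi regularity applied to $\Gr$. In the reduced graph, if the red density is too high or the red structure is not clustered, we find either
\begin{itemize}
\item a connected matching of weight exceeding $n/2$ in the reduced graph, or
\item a dense regular component with a large one-sided piece.
\end{itemize}
We then embed $T$ by splitting it into $O(1/\eps)$ small subtrees and allocating whole subtree edges to regular pairs. This uses the standard bipartite-split embedding for trees in regular pairs.

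The complement case is handled by a Chv\'atal-type argument. When $\Gr$ is sparse in the reduced sense, $\Gb$ has density above the Tur\'an threshold $1-1/q$ plus slack. The Erd\H{o}s--Stone theorem with blow-up then yields a blue $K_{q+1}(t)$ for large $t$, hence a blue $F$.

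I expect this step, a tree embedding that works for all trees including those with high-degree vertices, to be the main technical obstacle. For unbalanced trees (stars, brooms), the regular-pair allocation must be replaced by a direct degree argument.

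\textbf{Step 2 (structured case).} Now suppose the vertex set nearly partitions into sets $V_1,\ldots,V_q$, each of size about $r-1$, plus an exceptional set $W$. Within each $V_i$ the coloring is mostly red; between different parts it is mostly blue.

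Note the exact threshold: $r-1$ is the largest order of a coloring with no red $T$ and no blue $K_{a,m_2}$. By cleaning, I would move vertices so that each part is maximal subject to the part having no blue $K_{m_1,m_2}$. Since $N=q(r-1)+a$, pigeonhole yields one of two outcomes:
\begin{itemize}
\item some part has at least $r$ vertices, and so contains a blue $K_{m_1,m_2}$; or
\item at least $a$ vertices are left over.
\end{itemize}
The leftover vertices have almost all blue edges to most parts; otherwise they could be absorbed into a part, giving a red $T$ via the near-clique.

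We then need to assemble $F$. The pieces are:
\begin{itemize}
\item a blue $K_{m_1,m_2}$, or $a$ leftover vertices, serving as two classes;
\item $m_j$-sets from the other parts, serving as the remaining $q-1$ classes.
\end{itemize}
These $m_j$-sets are chosen in common blue neighbourhoods, which exist because cross edges are almost all blue and the part sizes are linear in $n$.

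\textbf{Step 3 (compactness for bounded obstructions).} The delicate point is that the red defects between parts and near $W$ are bounded but arbitrary. I would encode the possible blue-$F$ failures as a bounded-rank hypergraph on at most $|F|$-sized vertex sets. Each hyperedge is a subset that cannot be completed into a blue $F$.

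A compactness (finite-basis) argument then shows the following: if every finite configuration of bounded size admits a blue $F$ whenever the counting forces it, then so does the whole coloring. Concretely, it suffices to check configurations on $O_F(1)$ exceptional vertices together with a few representatives from each part. This reduces the problem to a finite check, which is uniform in $n$ once $n\ge n_0$.

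Combining the steps, every coloring of $K_N$ contains a red $T$ or a blue $F$. This gives \Cref{thm:main}.
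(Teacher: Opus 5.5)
\textbf{There is a genuine gap, at the hardest point of the problem: distributing the exceptional vertices.} Below, as in your write-up, red is the $T$-colour, blue is the $F$-colour, and $H=K_{m_1,m_2}$.

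In Step 2 you fill the parts greedily until each is maximal without a blue $H$. You note that at least $a$ vertices are then left over, and propose to use $a$ of them as the $m_1$-class of a blue $F$. This does not follow. A leftover vertex $x$ is only known to lie, for each $i$, in some blue $H$ inside $V_i\cup\{x\}$. That copy can fail to extend to $F$ because some \emph{other} vertex of it has almost no blue neighbours in another part. For instance, this can be an exceptional vertex that the greedy order happened to put into $V_i$. Also, $a$ leftover vertices need not have a common blue neighbourhood of positive density in every part, which is what using them as a class requires.

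Your description of the exceptional vertices is also not right, and neither is its justification via ``absorbing gives a red $T$''. The correct consequence of having no red $T$ is the profile bound (\Cref{lem:profile}: split $T$ at a centroid and spread the branches over the reservoirs). It only gives $\sum_i\rho_i(x)\ge q-1-o(1)$, where $\rho_i(x)$ is the blue density of $x$ into part $i$. So genuinely fractional profiles occur, for example blue density $1/2$ into each of two parts.

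What you need is the converse of your pigeonhole. If there is no blue $F$, then all but at most $a-1$ exceptional vertices can be split as $X_1,\ldots,X_q$ so that $V_i\cup X_i$ still has no blue $H$. Then $|V_i\cup X_i|\le r-1$ for every $i$, and $N-(a-1)\le q(r-1)$ contradicts $N=q(r-1)+a$. This is a cooperative-colouring problem for the hypergraphs $\cC_i$ of inclusion-minimal sets $E\subseteq W$ for which the blue graph on $V_i\cup E$ contains $H$; these have rank at most $m_1+m_2$. It is subject to three $o(1)$-approximate constraints:
\begin{itemize}
\item the profile bound above;
\item $a$-set separation: every $a$-set has $o(1)$ common blue density in some part;
\item obstruction blocking: every $E\in\cC_i$ has $o(1)$ common blue density in some part $j\ne i$.
\end{itemize}
The last two hold because otherwise a blue $F$ can be built greedily.

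\textbf{Your Step 3 does not supply this.} The defects are not bounded: $W$ has size $o(n)$ but unbounded. Whether all but $a-1$ vertices of $W$ can be cooperatively coloured is a global property, not decided by subconfigurations on $O_F(1)$ vertices. So the ``finite check'' restates the goal rather than proving it.

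The paper's new idea is a random rounding:
\begin{itemize}
\item pick one uniform vertex $\omega_i$ in each part;
\item let $P_i$ be the set of exceptional vertices blue-adjacent to $\omega_i$;
\item send each vertex lying in exactly $q-1$ of the $P_i$ to the missing part.
\end{itemize}
Obstruction blocking makes each class independent in $\cC_i$. $a$-set separation makes the set $Y$ of vertices lying in all the $P_i$ have at most $a-1$ elements. The profile bound gives $\mathbb E(|U|-|Y|)\le0$, where $U$ is the set of vertices missing two or more of the $P_i$. Hence some outcome deletes only $|U|\le|Y|\le a-1$ vertices. Because the constraints hold only approximately, this is proved exactly for a countable limit system and transferred back by compactness. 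That step orders vertices by impurity and uses shadow hypergraphs for obstructions whose vertices escape to infinity; see \Cref{thm:exactrounding,thm:compactness}.

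\textbf{Step 1 is also underspecified in the one regime that matters.} When the red average degree is only slightly above $n\approx N/q$, neither your embedding branch nor your Erd\H{o}s--Stone branch applies. Embedding an arbitrary $n$-vertex tree from that average degree alone is an Erd\H{o}s--S\'os-type statement, which is not available as a standard tool, and connected matchings do not handle trees with high-degree vertices. The paper instead uses that the blue graph is $F$-free to show the reduced red graph has independence number $o(\ell)$. This yields a heavy edge to host the seeds and an almost perfect matching to host the shrubs. The near-extremal structure then comes from Erd\H{o}s--Simonovits stability applied to the blue graph.
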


Writing $r=R(T,K_{m_1,m_2})$, Burr's lower construction applied to
$K_{m_1,m_2}$ gives
\[
 r\ge(n-1)+m_1\ge n.
\]
Together with \Cref{eq:burrlower,thm:main}, this gives the quantitative
interpretation
\begin{equation}\label{eq:excess}
 0\le R(T,F)-\bigl(q(n-1)+a\bigr)\le q(r-n).
\end{equation}
Thus the excess over the canonical Ramsey-goodness lower bound is controlled
by the excess in the bipartite problem involving the two smallest classes.

The proof follows the chain
\[
 \begin{gathered}
 \text{uniform EFRS asymptotic}
 \Longrightarrow \text{off-Tur\'an embedding}\\
 \Longrightarrow \text{near-Tur\'an red density}
 \Longrightarrow \text{stable reservoirs}\\
 \Longrightarrow \text{profile and blocker inequalities}
 \Longrightarrow \text{compactness rounding}\\
 \Longrightarrow \text{Ramsey capacity contradiction}.
 \end{gathered}
\]
The uniform asymptotic, regularity~\cite{Szemeredi1978}, stability, and the
K\H{o}v\'ari--S\'os--Tur\'an theorem are standard inputs.  The
local rooted regular-pair lemma of Hladk\'y and Piguet is used after a
whole-edge allocation and a stateful packedness argument.  The off-Tur\'an
theorem and the null-blocker compactness theorem are proved here.

For context, bounded-degree tree Ramsey goodness was developed by Balla,
Pokrovskiy, and Sudakov~\cite{BallaPokrovskiySudakov2018}, and the optimal
fixed-chromatic-number form for bounded-degree trees versus general fixed
graphs was subsequently obtained by Montgomery, Pavez-Sign\'e, and
Yan~\cite{MontgomeryPavezSigneYan2025}.  These results do not cover arbitrary
trees.  Cooperative coloring was introduced for families of graphs in
\cite{AharoniEtAl2015} and has also been studied for hypergraphs in
\cite{BaiEtAl2024}.  The recent work of Mi and Wang
\cite{MiWang2026} concerns a different regime in which one part of the
complete multipartite graph grows.

\section{Notation and external inputs}\label{sec:inputs}

All graphs are finite and simple, and all graph copies are ordinary,
non-induced copies.  If $J$ is a graph and $A,B\subseteq V(J)$ are disjoint,
then $e_J(A,B)$ denotes the number of edges with one endpoint in $A$ and one
in $B$.  Write $e_J(A)$ for the number of edges of $J[A]$.  For a
vertex $x$, write $d_J(x,A)=|N_J(x)\cap A|$.  Write
$\overline d(J)=2e(J)/|V(J)|$ for average degree, and write $\mathbf 1_E$
for the indicator of an event $E$.  A set is independent in a hypergraph if
it contains no hyperedge.  A cooperative coloring of hypergraphs
$(\mathcal H_1,\ldots,\mathcal H_q)$ on a common vertex set is a partition
$X_1\dot\cup\cdots\dot\cup X_q$ such that $X_i$ is independent in
$\mathcal H_i$ for every $i$.  For $q\ge1$, let $t_q(N)$ denote the number
of edges in the balanced $q$-partite Tur\'an graph on $N$ vertices, so
\[
 t_q(N)=\frac{q-1}{2q}N^2+O_q(1).
\]
Throughout, $[q]=\{1,\ldots,q\}$ and $y_+=\max\{y,0\}$ for real $y$.
Hypergraph edges are always nonempty unless explicitly stated otherwise.  We
set the rank of the empty hypergraph to $0$; otherwise, rank is the maximum
edge order.  We use the convention
\[
 \bigcap_{x\in\varnothing}A(x)=\Omega
\]
for events $A(x)\subseteq\Omega$.

\begin{proposition}[EFRS uniform asymptotic]\label{prop:efrs}
Let $Q$ be a fixed graph without isolated vertices and with chromatic number
$s\ge2$.  For every $\theta>0$ there is $n_0=n_0(Q,\theta)$ such that, for
every $n\ge n_0$ and every $n$-vertex tree $T$,
\[
 \bigl|R(T,Q)-(s-1)n\bigr|\le\theta n.
\]
In particular, for fixed positive integers $a,b$,
\[
 R(T,K_{a,b})=n+o(n)
\]
uniformly over all $n$-vertex trees.
\end{proposition}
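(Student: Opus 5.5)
The lower bound $R(T,Q)\ge (s-1)(n-1)+\sigma(Q)\ge (s-1)n-(s-1)$ is Burr's construction as recalled in the introduction. It applies because $T$ is connected with $n\ge2$ vertices, so for $n\ge (s-1)/\theta$ the lower inequality holds. The real content is therefore the upper bound
\[
 R(T,Q)\le (s-1+\theta)n,
\]
uniformly over all $n$-vertex trees. Since $Q$ has no isolated vertices and $\chi(Q)=s$, we have $Q\subseteq K_{t,\ldots,t}$ ($s$ parts, $t=|V(Q)|$), so it suffices to prove the upper bound for $Q=K_s(t)$. I would cite this as the Erd\H{o}s--Faudree--Rousseau--Schelp asymptotic theorem. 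If I had to prove it, I would argue as follows.

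Let $N=(s-1+\theta)n$ and take a red/blue colouring of $K_N$ with no blue $K_s(t)$. Write $\Gr$ for the red graph and $\Gb$ for the blue graph. Apply Szemer\'edi's regularity lemma with parameter $\eps\ll\theta$ to $\Gb$ and form the reduced graph $\Gamma$, whose edges are the $\eps$-regular pairs of blue density at least $\delta$. By the counting (blow-up) lemma, $\Gamma$ contains no $K_s$, since such a clique would produce a blue $K_s(t)$. Turán's theorem then gives that $\Gamma$ has at most $t_{s-1}(|\Gamma|)$ edges. Passing to the complement, the red reduced graph (pairs of red density at least $1-\delta$, together with the irregular and sparse pairs absorbed into $\eps$) has average degree at least about $|\Gamma|/(s-1)$. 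The vertex count $N=(s-1+\theta)n$ therefore translates into a red structure of weight at least $(1+\theta/(s-1))n$ per cluster. More precisely, the red reduced graph has a connected component containing a fractional matching, or a "star-like" structure, of total cluster weight exceeding $n(1+\theta/2)$ in red.

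The main step is embedding an arbitrary $n$-vertex tree into a red regular structure of total size $(1+\theta/2)n$ that is dense in the red reduced sense. For this I would use the standard tree-embedding machinery: cut $T$ into $O(1/\beta)$ subtrees of size at most $\beta n$ via a separator argument, and embed these greedily along a connected red matching in the reduced graph. Each subtree is split between the two clusters of a regular pair according to its bipartition, and connectivity is used to move between pairs. High-degree vertices of $T$ are handled because red degrees inside dense regular pairs are linear. I expect this step to be the main obstacle. If the red reduced graph has no large connected matching, I would fall back on a Kőnig/Gallai–Edmonds type dichotomy: either there is a large red connected matching, or there is a large set of clusters with high minimum red degree. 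In the second case the tree embeds greedily, since a graph of minimum degree at least $n$ contains every $n$-vertex tree.

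Finally, the special case follows by taking $Q=K_{a,b}$ and $s=2$, giving $R(T,K_{a,b})=n+o(n)$ uniformly over all $n$-vertex trees.
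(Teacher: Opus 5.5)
Your argument is correct and is essentially the paper's own proof. The lower bound comes from Burr's construction, $R(T,Q)\ge(s-1)(n-1)+\sigma(Q)$, and the upper bound $R(T,Q)\le(s-1+\theta)n$ is taken from the uniform corollary of Erd\H{o}s--Faudree--Rousseau--Schelp; your monotone reduction to $Q\subseteq K_s(t)$ is harmless.

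Do not offer the optional regularity sketch as a substitute for that citation, because it would not stand alone. Embedding along a connected red matching only gives degrees linear in the cluster size, so it cannot host a tree vertex of much larger degree, such as the centre of a star. The missing idea is to use that the red reduced graph has small independence number, put the few high-degree cut vertices into clusters of large red reduced degree, and spread the small subtrees over matching edges in their neighbourhoods. This is what the paper's off-Tur\'an argument does.
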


\begin{proof}
The uniform corollary of Erd\H{o}s, Faudree, Rousseau, and
Schelp~\cite[Corollary, p.~316]{EFRS1985} gives the upper estimate
\[
 R(Q,T)\le (s-1+\theta)n
\]
after increasing its uniform threshold if necessary.  Burr's standard
lower construction gives
\[
 R(Q,T)=R(T,Q)\ge (s-1)(n-1)+\sigma(Q).
\]
After another harmless increase of the threshold, these two estimates and
Ramsey symmetry give the displayed two-sided estimate.
\end{proof}

\begin{proposition}[Erd\H{o}s--Simonovits stability]\label{prop:stability}
Fix a graph $F$ with $\chi(F)=q+1$.  Let $(J_N)$ be a sequence of $F$-free
graphs, where $|V(J_N)|=N$ and $N\to\infty$.  If
\[
 e(J_N)\ge t_q(N)-o(N^2),
\]
then $V(J_N)$ has a partition into $q$ parts for which the total number of
edges internal to the parts is $o(N^2)$.
\end{proposition}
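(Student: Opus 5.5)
This is the classical Erd\H{o}s--Simonovits stability theorem, and I would derive it from the Erd\H{o}s--Stone theorem and the Removal Lemma rather than reprove it from scratch. The approach is to reduce the sequence version stated above to a standard $\varepsilon$--$\delta$ statement: for every $\varepsilon>0$ there exist $\delta>0$ and $N_0$ such that every $F$-free graph $J$ on $N\ge N_0$ vertices with $e(J)\ge t_q(N)-\delta N^2$ has a partition into $q$ parts with at most $\varepsilon N^2$ internal edges. Given this, the sequence version is a diagonal argument. Suppose $e(J_N)\ge t_q(N)-\eta_N N^2$ with $\eta_N\to0$. For each $\varepsilon$, eventually $\eta_N\le\delta(\varepsilon)$ and $N\ge N_0(\varepsilon)$. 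Choosing $\varepsilon_N\to0$ slowly enough then gives $o(N^2)$ internal edges.

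For the $\varepsilon$--$\delta$ version I would use the standard route via symmetrization on a cleaned graph. The first step removes low-degree vertices. Repeatedly delete a vertex of degree less than $(1-1/q-\sqrt\delta)$ times the current order. Since the edge count stays close to Tur\'an while at most $O(\sqrt\delta N)$ vertices are deleted, the deleted set is small, and the remaining graph $J'$ has minimum degree at least $(1-1/q-2\sqrt\delta)|J'|$. Its edge density is still at least the Tur\'an density minus $O(\delta)$.

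The second step finds a large clique structure. Since $\chi(F)=q+1$, $F\subseteq K_{q+1}(t)$ for $t=|F|$. By Erd\H{o}s--Stone, $J'$ contains $K_q(t')$ for $t'$ large, but it contains no $K_{q+1}(t)$. I would instead use the cleaner approach of F\"uredi. Apply the graph removal lemma to delete $o(N^2)$ edges so as to make $J'$ $K_{q+1}$-free; this is possible because $J'$ contains $o(N^{q+1})$ copies of $K_{q+1}$, as otherwise supersaturation would give $K_{q+1}(t)\supseteq F$. Then apply the exact $K_{q+1}$-free stability bound. A $K_{q+1}$-free graph with $t_q(N)-s$ edges can be made $q$-partite by deleting at most $s$ edges, which one proves by induction via Zykov symmetrization, or by F\"uredi's max-degree argument. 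A vertex $v$ of maximum degree has a $K_q$-free neighbourhood; induct on $N(v)$ and place $V\setminus N(v)$ as one part, counting the edges inside $V\setminus N(v)$ against the degree deficit. Adding back the removed $o(N^2)$ edges and the deleted low-degree vertices, distributed arbitrarily, costs $O(\sqrt\delta N^2)+o(N^2)$ internal edges.

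The main obstacle is the passage from $F$-free to $K_{q+1}$-free. This needs both supersaturation, meaning that $\Omega(N^{q+1})$ copies of $K_{q+1}$ force a copy of $K_{q+1}(t)$ (a hypergraph K\H{o}v\'ari--S\'os--Tur\'an / Erd\H{o}s argument), and the removal lemma from regularity. I would cite these as standard, in the same way the paper cites regularity. Alternatively, I would simply quote Erd\H{o}s and Simonovits' original stability theorem directly, since this proposition is listed among the standard external inputs.
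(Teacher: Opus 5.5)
Your proposal is correct, but it takes a different route from the paper, which gives no proof at all. The paper simply quotes the classical Erd\H{o}s--Simonovits stability theorem as an external input, which is your fallback option.

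Your main route is F\"uredi's derivation, and it goes through:
\begin{enumerate}
\item Erd\H{o}s's hypergraph supersaturation shows that an $F$-free graph has only $o(N^{q+1})$ copies of $K_{q+1}$; otherwise it would contain $K_{q+1}(|F|)\supseteq F$.
\item The removal lemma then deletes $o(N^2)$ edges to reach a $K_{q+1}$-free graph.
\item The exact statement ``deficit $s$ from $t_q(N)$ gives a $q$-partition with at most $s$ internal edges'' follows from the max-degree induction you sketch. With $A=N(v)$ and $B=V\setminus A$, maximality of $\deg v=\Delta$ gives $e(B)\le e(A)+|B|\Delta-e(G)$. The inequality $t_{q-1}(\Delta)+\Delta(N-\Delta)\le t_q(N)$ then closes the induction.
\end{enumerate}
What your route buys is a derivation from tools the paper already relies on (regularity is used anyway in the off-Tur\'an theorem), together with an exact, elementary statement in the clique case. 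The cost is tower-type dependence through the removal lemma. That is harmless here, since only an $o(N^2)$ bound is ever needed.

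Two of your steps are superfluous. First, the minimum-degree cleaning is never used afterwards. Second, the $\varepsilon$--$\delta$ reduction can be skipped, because supersaturation and removal apply directly to $J_N$. They give a $K_{q+1}$-free subgraph with $t_q(N)-o(N^2)$ edges, hence a $q$-partition with $o(N^2)$ internal edges. That bound survives restoring the $o(N^2)$ removed edges. The aside about finding $K_q(t')$ via Erd\H{o}s--Stone is a dead end you abandon, and it can simply be deleted.
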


This is the Erd\H{o}s--Simonovits stability theorem in the form used below
\cite{Simonovits1968}.

We also use the K\H{o}v\'ari--S\'os--Tur\'an theorem in the form
\[
 \operatorname{ex}(N,K_{a,b})=o(N^2)
\]
for fixed $a,b$~\cite{KST1954}.  When $a=1$, the assertion needed below is
the elementary fact that a $K_{1,b}$-free graph has maximum degree at most
$b-1$.

\section{A tailored off-Tur\'an tree-embedding theorem}\label{sec:offturan}

The next theorem is the only density-to-embedding statement needed in the
Ramsey proof.  Its complement hypothesis is used at the reduced-graph level,
so no average-degree form of the Erd\H{o}s--S\'os conjecture is required.

We use the regularity lemma with an initial prepartition from the published
article of Hladk\'y and Piguet~\cite[Theorem~5.11]{HladkyPiguet2016}.  Its
per-cluster irregular-pair bound implies the following global consequence:
an $(\varepsilon,\ell)$-regular partition
\[
 V(G)=V_0\mathbin{\dot\cup}V_1\mathbin{\dot\cup}\cdots
      \mathbin{\dot\cup}V_\ell
\]
has equal nonexceptional cluster order $s$, has
$|V_0|<\varepsilon|V(G)|$, and has at most $\varepsilon\ell^2$
irregular unordered pairs of nonexceptional clusters.

For disjoint nonempty vertex sets $P,Q$, write
\[
 d(P,Q)=\frac{e(P,Q)}{|P||Q|}.
\]
The pair $(P,Q)$ is \emph{$\varepsilon$-regular} if
\[
 |d(P',Q')-d(P,Q)|\le\varepsilon
\]
whenever $P'\subseteq P$ and $Q'\subseteq Q$ satisfy the strict inequalities
$|P'|>\varepsilon|P|$ and $|Q'|>\varepsilon|Q|$.  Such subsets are called
\emph{significant}.  Given significant $Q'\subseteq Q$, a vertex $v\in P$
is \emph{typical to $Q'$} if
\[
 |N(v)\cap Q'|\ge\bigl(d(P,Q)-\varepsilon\bigr)|Q'|;
\]
the symmetric definition applies to vertices of $Q$.

We first record the local fact used in the stateful matching embedding.

\begin{lemma}[Rooted regular-pair embedding]\label{lem:rootedpair}
Let $(P,Q)$ be an $\varepsilon$-regular pair of equal order $s$ and density
at least $d$, where $d>2\varepsilon>0$.  Let $(K,r)$ be a rooted tree and
let $P'\subseteq P_0\subseteq P$ and $Q'\subseteq Q_0\subseteq Q$.  Let
$h$ be a positive integer.  If
\[
 \min\{|P_0|,|Q_0|\}\ge h,
 \qquad
 \max\{|P'|,|Q'|\}\ge h,
 \qquad
 h\ge\frac{\varepsilon s+|K|}{d-2\varepsilon},
\]
then $K$ embeds in $P_0\cup Q_0$, with $r$ in $P'\cup Q'$.  If
$|P'|\ge h$, the root may be put in $P'$ and its bipartition class in
$P_0$; the analogous assertion holds for $Q'$.
\end{lemma}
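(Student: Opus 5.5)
We embed $K$ greedily in breadth-first order from the root. Each tree vertex is placed so that its image is typical to the still-available vertices on the opposite side. Fix an ordering $r=u_1,u_2,\dots,u_{|K|}$ of $V(K)$ in which every non-root vertex appears after its parent. The bipartition class of $r$ goes to the side containing the large root set, say $P$ when $|P'|\ge h$. Vertices at even distance from $r$ go into $P_0$ and vertices at odd distance go into $Q_0$.

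We maintain the following invariant. Every embedded vertex $\phi(u)$ whose children are not yet all embedded is typical to the set of currently unused vertices of the opposite target set ($Q_0$ or $P_0$). Regularity makes this cheap to maintain. Let $Y\subseteq Q$ be significant, i.e.\ $|Y|>\varepsilon s$. Then at most $\varepsilon s$ vertices of $P$ fail to be typical to $Y$, since otherwise the set of atypical vertices together with $Y$ would witness a density deviation exceeding $\varepsilon$. The same holds with the roles of $P$ and $Q$ swapped.

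\textbf{Placing the root.} The unused part of $Q_0$ is all of $Q_0$, and $|Q_0|\ge h>\varepsilon s$, so $Q_0$ is significant. Since $|P'|\ge h>\varepsilon s$, some vertex of $P'$ is typical to $Q_0$, and we map $r$ there.

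\textbf{Placing a child.} Suppose $u$ has already been embedded into $P_0$ and we now embed a child $w$ into $Q_0$. Let $U$ be the set of unused vertices of $Q_0$, so $|U|\ge h-|K|$. Because $\phi(u)$ is typical to the set $U^*\supseteq U$ of unused vertices at the time $\phi(u)$ was chosen, it has at least
\[
(d-\varepsilon)|U^*|-|K|
\]
neighbours in $U$. We need a neighbour of $\phi(u)$ in $U$ that is also typical to the current unused part of $P_0$. At most $\varepsilon s$ vertices of $Q$ fail that typicality, provided the unused part of $P_0$ is significant, which holds since its size is at least $h-|K|$.

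So it suffices to have
\[
(d-\varepsilon)h-|K|>\varepsilon s,
\]
which the hypothesis $h(d-2\varepsilon)\ge\varepsilon s+|K|$ gives, with slack $\varepsilon h$. The hypothesis also gives $h-|K|\ge\varepsilon s$ after adjusting. A strict inequality is needed for significance, and the exact constant requires some care.

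\textbf{Main obstacle: keeping typicality valid over time.} The typicality of $\phi(u)$ was measured against the unused set at the moment $\phi(u)$ was chosen, and later vertices shrink that set. The fix is to require typicality of $\phi(u)$ only with respect to the full target set $Q_0$ (respectively $P_0$). This is significant since $|Q_0|\ge h>\varepsilon s$, and the requirement never changes. Then $\phi(u)$ has at least $(d-\varepsilon)|Q_0|\ge(d-\varepsilon)h$ neighbours in $Q_0$. At most $|K|$ of them are used, and at most $\varepsilon s$ of them are atypical to $P_0$.

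This leaves
\[
(d-\varepsilon)h-|K|-\varepsilon s\ge \varepsilon h>0
\]
candidates, so a valid choice always exists. The argument is symmetric when $|Q'|\ge h$, and that case yields the final assertion of the lemma.
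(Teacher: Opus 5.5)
Your final argument (typicality measured against the fixed targets $P_0,Q_0$, with $(d-\varepsilon)h-|K|-\varepsilon s\ge\varepsilon h>0$ candidates at each step) is correct and is essentially the paper's greedy proof; the paper instead fixes $h$-subsets $B_P\subseteq P_0$ and $B_Q\subseteq Q_0$, which is immaterial. The one step you leave implicit, $h>\varepsilon s$, follows as in the paper from $d-2\varepsilon<1$ (a pair density is at most $1$) together with $|K|\ge1$.
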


\begin{proof}
The root-placement statement is Hladk\'y--Piguet's
Lemma~5.12~\cite{HladkyPiguet2016}; the bipartition-side refinement follows
from the following standard greedy proof.  Choose sets
$B_P\subseteq P_0$ and $B_Q\subseteq Q_0$, each of order $h$.  Since a pair
density is at most $1$, the numerical hypothesis gives
\[
 h\ge\frac{\varepsilon s+|K|}{d-2\varepsilon}>\varepsilon s.
\]
Thus $B_P$ and $B_Q$ are significant sets in the strict regularity
convention.  At most $\varepsilon s$ vertices on either side fail to be
typical to the selected set on the other side.  If $|P'|\ge h$, choose the
root image in $P'$ typical to $B_Q$; this image need not belong to $B_P$.
Embed every other vertex in the root's bipartition class in $B_P$, and
embed the opposite class in $B_Q$.  The case $|Q'|\ge h$ is symmetric.

For completeness, embed vertices in parent-before-child order, maintaining
that every embedded nonroot vertex is typical to the selected set on the side
into which its children will be embedded.  Suppose that $u$ vertices have
already been embedded and that the next embedded parent has $c$
as-yet-unembedded children.  Its
typicality gives at least $(d-\varepsilon)h$ neighbors in the opposite
selected set.  After excluding the at most $\varepsilon s$ vertices that
are atypical to the selected set on the parent's side and the $u$ used
vertices, the number of permissible neighbors is at least
\[
 (d-\varepsilon)h-\varepsilon s-u
 \ge |K|-u+\varepsilon h\ge c.
\]
Here we used $(d-2\varepsilon)h\ge\varepsilon s+|K|$ and
$u+c\le |K|$.  Choosing the children typical to the other selected set
maintains the induction.  The two tree bipartition classes remain on their
prescribed sides.
\end{proof}

\begin{corollary}[Fine-partition consequences]\label{cor:fineconsequences}
Let $T$ be a rooted $n$-vertex tree and let $k$ be an integer with
$1\le k<n-1$.  There is a $k$-fine partition
$(S_X,S_Y,\mathcal D_X,\mathcal D_Y)$ satisfying
\[
 |S_X|,|S_Y|\le\frac{12(n-1)}{k}
\]
such that the root belongs to $S_X\cup S_Y$ and
$\mathcal D_X\mathbin{\dot\cup}\mathcal D_Y$ is precisely the set of
components of $T-(S_X\cup S_Y)$.  Every such component $K$ has at most $k$
vertices.  Put $S=S_X\cup S_Y$, and call the members of
\[
 \partial_S K=N_T(V(K))\cap S
\]
the \emph{boundary seeds} of $K$.  For $Z\in\{X,Y\}$ and
$K\in\mathcal D_Z$,
\[
 \varnothing\ne\partial_S K\subseteq S_Z.
\]
We say in this case that $K$ is \emph{routed to $Z$}.  Distances within
each $S_Z$ are even and distances between $S_X$ and $S_Y$ are odd; in
particular, every edge of $T[S]$ joins $S_X$ to $S_Y$.  Moreover, all
vertices of $K$ adjacent to $\partial_S K$ lie in one bipartition class of
$K$.
\end{corollary}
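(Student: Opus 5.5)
The plan is to derive the corollary from the Hladký--Piguet decomposition of trees into $k$-fine partitions \cite{HladkyPiguet2016}, and then verify the parity and boundary assertions directly from the construction. I recall the definition: a $k$-fine partition of a rooted tree $(T,r)$ consists of seed sets $S_X,S_Y$ and families $\mathcal D_X,\mathcal D_Y$ of shrubs. The root lies in $S_X\cup S_Y$, and each shrub has at most $k$ vertices. Every shrub in $\mathcal D_Z$ is adjacent only to seeds in $S_Z$. Distances between seeds have the prescribed parity: even within a side, odd across sides. I will invoke their existence lemma, which gives $|S_X|,|S_Y|\le 36(n-1)/k$ in the original formulation. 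The first step is therefore to check which constant the published version actually gives, and whether the $12(n-1)/k$ bound here holds directly or needs a sharper cutting argument.

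\textbf{Cutting.} If a sharper constant is needed, I will rerun the standard cutting procedure. Process $T$ bottom-up from the root. Each time a vertex $v$ first has a subtree below it containing more than $k$ vertices not yet separated, declare $v$ a cut vertex. Each cut vertex separates at least $k$ vertices, so there are at most $(n-1)/k$ cut vertices. Close this set under pairwise lowest common ancestors; this at most doubles it to about $2(n-1)/k$ and ensures every component of the remainder has at most two cut-vertex neighbours. To achieve the parity requirements, split the seeds by the parity of their depth: $S_X$ gets even depth and $S_Y$ odd depth. Distances in a tree satisfy $\operatorname{dist}(u,v)\equiv \operatorname{depth}(u)+\operatorname{depth}(v)\pmod 2$. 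So distances within a class are even and distances across classes are odd. In particular, every edge of $T[S]$ joins $S_X$ to $S_Y$.

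\textbf{Routing the components.} A component $K$ of $T-S$ that touches seeds of both parities cannot be routed to a single side. I handle this by enlarging $S$: add to the seed set the neighbours of $K$ adjacent to the offending seeds, which changes the parity pattern locally. This adds a bounded number of seeds per problematic component, and the number of problematic components is $O((n-1)/k)$. The careful accounting of these additions is what should produce the stated constant $12$. This accounting, together with keeping the parity condition intact after the additions, is the step I expect to be the main obstacle.

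\textbf{Checking the remaining properties.} By construction, each component $K$ of $T-S$ is assigned to $\mathcal D_Z$ exactly when $\varnothing\ne\partial_S K\subseteq S_Z$. Nonemptiness holds because $T$ is connected and $S\ne\varnothing$ (since $r\in S$). The hypothesis $k<n-1$ is used to guarantee that cutting actually occurs, or simply that the root seed exists; the size bound $|K|\le k$ holds by the cut rule. For the final assertion, let $u,v\in K$ be adjacent to seeds $s,t\in\partial_S K\subseteq S_Z$. Since $s$ and $t$ lie in the same class, $\operatorname{dist}(s,t)$ is even. Hence $\operatorname{dist}(u,v)=\operatorname{dist}(s,t)-2$ (when $u\ne v$, because the path from $s$ to $t$ runs through $u$ and $v$ inside $K$), which is even. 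So $u$ and $v$ lie in the same bipartition class of $K$.
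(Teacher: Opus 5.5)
Your primary route is the paper's: take the $k$-fine partition from Definition~5.2 and Lemma~5.3 of Hladk\'y--Piguet, get $\partial_S K\ne\varnothing$ from $r\in S$ and connectivity, and deduce the one-class property from the even distances between seeds on the same side. Two steps, however, are not secured as written.

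\textbf{The size bound.} The paper reads $|S_X|,|S_Y|\le 12(n-1)/k$ directly off Lemma~5.3 applied with parameter $k$, so no recount is needed. You instead attribute a weaker constant to that lemma and leave the routing step of your fallback as an open obstacle. As written, your proposal therefore does not establish the stated bound.

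If you want the self-contained route, it closes with an observation you already have. After adding $r$ to the cut vertices and taking the LCA closure $W'$, each component $K$ of $T-W'$ has exactly one upper seed $u$ (the parent of its top vertex). It also has at most one lower seed $t$, since two lower seeds would have their lowest common ancestor inside $K$. If $t$ and $u$ have different depth parities, add the parent $p(t)\in K$ to the seed set. Then:
\begin{itemize}
\item $p(t)$ has the depth parity of $u$, so it is not the top vertex of $K$.
\item The piece of $K-p(t)$ containing the top vertex has boundary $\{u,p(t)\}$, and each piece below $p(t)$ has boundary $\{p(t)\}$.
\item Since the sides are defined by depth parity, nothing else is disturbed, and components only shrink.
\end{itemize}
At most $|W'|-1$ vertices are added, because distinct components have distinct lower seeds and $r$ is nobody's lower seed. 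With at most $n/(k+1)$ cut vertices this gives $|S|\le 2|W'|-1\le 4n/(k+1)+1$. That is below $12(n-1)/k$ whenever $1\le k<n-1$.

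\textbf{The parity case split.} The case split in your final step is wrong. The identity $\operatorname{dist}(u,v)=\operatorname{dist}(s,t)-2$ needs $s\ne t$, not $u\ne v$: for $s=t$ and $u\ne v$ it would read $\operatorname{dist}(u,v)=-2$. That configuration must be excluded separately. It is easy to exclude: the edges $su$, $sv$ and the $u$--$v$ path in $K$ would form a cycle in $T$, so $s=t$ forces $u=v$. With this case added (the paper treats it explicitly), your argument for the last assertion coincides with the paper's.
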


\begin{proof}
Apply Definition~5.2 and Lemma~5.3 of Hladk\'y and
Piguet~\cite{HladkyPiguet2016} with parameter $k$, and write the resulting
fine partition as
$(W_X,W_Y,\mathcal D_X,\mathcal D_Y)$.  In the construction proving that
result, the shrubs are precisely the components left after the cut set
$W_X\cup W_Y$ is removed.  Because the root is in the cut set and $T$ is
connected, every remaining component has at least one cut-set neighbor.
The construction gives the asserted component partition, the order bound,
inclusion of the root, and the bounds on $|W_X|$ and $|W_Y|$; it also puts
every boundary seed of a shrub in its corresponding $W_Z$ and makes all
distances within each $W_Z$ even and all distances from $W_X$ to $W_Y$ odd.

It remains only to record the parity consequence used below.  Let $xu$ and
$x'u'$ be two boundary incidences of $K$.  The seeds $x,x'$ lie in the same
$W_Z$, so their distance in $T$ is even.  If $x\ne x'$, the unique
$x$--$x'$ path in $T$ passes through $K$, and therefore
\[
 \operatorname{dist}_K(u,u')=\operatorname{dist}_T(x,x')-2,
\]
which is even.  If $x=x'$, then $u=u'$: otherwise the edges $xu,xu'$ and
the $u$--$u'$ path in $K$ would form a cycle in $T$.  Thus in all cases
$u,u'$ lie in the same bipartition class of $K$.  Take
$S_X=W_X$ and $S_Y=W_Y$.
\end{proof}

\begin{lemma}[Whole-edge allocation]\label{lem:wholeedgeallocation}
Let $J$ be a finite set, let $s,t>0$, and let
$w_X,w_Y:J\to[0,2s]$.  Put
\[
 W_X=\sum_{j\in J}w_X(j),\qquad
 W_Y=\sum_{j\in J}w_Y(j).
\]
Let $R_X,R_Y,C_0\ge0$, put $a_Z=R_Z+C_0$ for $Z\in\{X,Y\}$, and suppose
\[
 W_X>0,\qquad W_Y>0,
 \qquad
 \frac{a_X+t}{W_X}+\frac{a_Y+t}{W_Y}<1,
 \qquad 2|J|s^2<t^2.
\]
Then $J$ has a partition $J=J_X\mathbin{\dot\cup}J_Y$ such that
\[
 \sum_{j\in J_Z}w_Z(j)\ge R_Z+C_0
 \qquad(Z=X,Y).
\]
\end{lemma}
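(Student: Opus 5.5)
The plan is to first allocate $J$ fractionally with slack $t$ on both sides, and then round by independent random choices, controlling the rounding error by Chebyshev's inequality. The hypothesis $2|J|s^2<t^2$ is exactly what makes a union bound over the two failure events work.

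\emph{Step 1 (fractional allocation).} Put
\[
 \lambda=\frac{a_X+t}{W_X}.
\]
Since $a_X,t\ge0$ and $W_X>0$, we have $\lambda\ge0$. The displayed ratio hypothesis gives $\lambda<1-\frac{a_Y+t}{W_Y}\le1$, so $\lambda\in[0,1)$. Moreover $\lambda W_X=a_X+t$ and $(1-\lambda)W_Y>a_Y+t$. Thus sending a $\lambda$-fraction of every $j$ to $X$ and the rest to $Y$ overshoots both targets $a_Z=R_Z+C_0$ by at least $t$.

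\emph{Step 2 (random rounding).} Independently for each $j\in J$, put $j$ in $J_X$ with probability $\lambda$ and in $J_Y$ otherwise. Let
\[
 S_X=\sum_{j\in J_X}w_X(j),\qquad S_Y=\sum_{j\in J_Y}w_Y(j).
\]
Then $\mathbb E S_X=\lambda W_X=a_X+t$ and $\mathbb E S_Y=(1-\lambda)W_Y>a_Y+t$. By independence, and using $w_Z(j)\in[0,2s]$ and $\lambda(1-\lambda)\le\frac14$,
\[
 \operatorname{Var}S_Z=\lambda(1-\lambda)\sum_{j\in J}w_Z(j)^2\le\frac14\,|J|(2s)^2=|J|s^2 .
\]

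\emph{Step 3 (Chebyshev and union bound).} The event $S_Z<a_Z$ forces $|S_Z-\mathbb E S_Z|>t$ in the case $Z=X$, and $|S_Z-\mathbb E S_Z|\ge t$ in the case $Z=Y$. In either case Chebyshev's inequality bounds its probability by $|J|s^2/t^2$. The two failure events concern the same random partition, so the probability that at least one occurs is at most $2|J|s^2/t^2<1$. Hence some outcome of the random partition is a partition $J=J_X\mathbin{\dot\cup}J_Y$ with $\sum_{j\in J_Z}w_Z(j)\ge R_Z+C_0$ for $Z=X,Y$.

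\emph{Main obstacle.} There is no serious obstacle here. The one point needing care is Step 1: checking that $\lambda$ is a genuine probability, and that the strict ratio inequality gives slack of at least $t$ on the $Y$ side as well as on the $X$ side. The slack must be $t$ on both sides simultaneously, since that is what lets the single variance bound $|J|s^2$ feed the union bound.

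Alternatively, one could round deterministically. Sort $J$ by the ratio $w_X/w_Y$ and take a prefix as $J_X$. The loss is then at most one item's weight, which is at most $2s$; this is even stronger than needed, but the threshold bookkeeping is fussier. I would therefore present the probabilistic argument.
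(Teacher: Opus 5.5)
Your proof is correct and is essentially the paper's argument: independent Bernoulli$(\lambda)$ rounding with $\lambda\in\bigl[\frac{a_X+t}{W_X},\,1-\frac{a_Y+t}{W_Y}\bigr]$ (you take the left endpoint), together with the per-item variance bound $\lambda(1-\lambda)(2s)^2\le s^2$. The only difference is cosmetic. The paper observes that $\mathbb E\bigl[(S_X-\mathbb ES_X)^2+(S_Y-\mathbb ES_Y)^2\bigr]\le 2|J|s^2<t^2$, so some outcome has both deviations below $t$. This second-moment step is equivalent to your two Chebyshev bounds combined with a union bound.
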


\begin{proof}
Choose
\[
 \frac{a_X+t}{W_X}\le\lambda\le
 1-\frac{a_Y+t}{W_Y}.
\]
Let $(I_j)_{j\in J}$ be independent Bernoulli variables with
$\Pr(I_j=1)=\lambda$, and define
\[
 S_X=\sum_{j\in J}I_jw_X(j),\qquad
 S_Y=\sum_{j\in J}(1-I_j)w_Y(j).
\]
Then $\mathbb ES_X=\lambda W_X\ge a_X+t$ and
$\mathbb ES_Y=(1-\lambda)W_Y\ge a_Y+t$.  Since
$\lambda(1-\lambda)\le1/4$ and $w_Z(j)\le2s$,
\[
 \begin{aligned}
 \operatorname{Var}S_X+\operatorname{Var}S_Y
 &=\sum_{j\in J}\lambda(1-\lambda)
       \bigl(w_X(j)^2+w_Y(j)^2\bigr)\\
 &\le2|J|s^2<t^2.
 \end{aligned}
\]
Thus the expectation of
$(S_X-\mathbb ES_X)^2+(S_Y-\mathbb ES_Y)^2$ is less than $t^2$.
For some outcome both absolute deviations are less than $t$.  Taking
$J_X=\{j:I_j=1\}$ and $J_Y=J\setminus J_X$ for that outcome gives
$S_Z\ge\mathbb ES_Z-t\ge a_Z$, as required.
\end{proof}

\begin{lemma}[One-edge state]\label{lem:oneedgestate}
Let $(C,D)$ be an $\varepsilon$-regular pair of order $s$ and density at
least $d>2\varepsilon$.  Let $K$ be a rooted tree, let $h$ be a positive
integer, and let $\tau,e_0,\mu\ge0$ satisfy
\[
 |K|\le\tau,\qquad
 \varepsilon s+|K|\le(d-2\varepsilon)h,
 \qquad \mu=h+\tau+2e_0.
\]
Let $U_C\subseteq C$ and $U_D\subseteq D$ be used sets, with
$c=|U_C|$ and $d'=|U_D|$.  Let $0\le L,R\le s$, let
$A_C\subseteq C$ and $A_D\subseteq D$, and let
\[
 P_C\subseteq A_C\setminus U_C,\qquad
 P_D\subseteq A_D\setminus U_D
\]
satisfy
\[
 |P_C|\ge L-c-e_0,\qquad |P_D|\ge R-d'-e_0.
\]
Suppose
\begin{equation}\label{eq:oneedge-surplus}
 c+d'+h+\mu\le L+R
\end{equation}
and the packedness alternative
\begin{equation}\label{eq:oneedge-packed}
 \min\{L,R\}-\mu\le\min\{c,d'\}
 \quad\text{or}\quad |c-d'|\le\tau
\end{equation}
holds.  Then $K$ embeds in $(C\setminus U_C)\cup(D\setminus U_D)$ so that
its root lies in $P_C\cup P_D$ and its entire root bipartition class lies
in the corresponding one of $A_C\setminus U_C,A_D\setminus U_D$.
After adding the embedded vertices to the used sets, the alternative
\Cref{eq:oneedge-packed} still holds with $c,d'$ replaced by the two new
loads.
\end{lemma}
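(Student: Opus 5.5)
The plan is to reduce to \Cref{lem:rootedpair}: choose which side receives the root, then check the numerical hypotheses on sets of free vertices, and finally verify that the packedness invariant survives.

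\emph{Choice of side.} We want $Z\in\{C,D\}$ with $|P_Z|\ge h$ and enough room on both sides. Put $P_0=C\setminus U_C$ and $Q_0=D\setminus U_D$, so that $|P_0|=s-c\ge L-c$ and $|Q_0|\ge R-d'$. From \Cref{eq:oneedge-surplus},
\[
 (L-c-e_0)+(R-d'-e_0)\ge h+\mu-2e_0=2h+\tau,
\]
so one of $|P_C|,|P_D|$ is at least $h+\tau/2\ge h$. Say it is $P_C$.

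To apply \Cref{lem:rootedpair} with $P'=P_C$ and $P_0$ replaced by $A_C\setminus U_C$ (so that the root's class lands in $A_C$), we also need
\[
 |D\setminus U_D|\ge h .
\]
Here \Cref{eq:oneedge-packed} is used.
\begin{itemize}[leftmargin=*]
\item In the balanced case $|c-d'|\le\tau$, we have $s-d'\ge s-c-\tau\ge |P_C|-\tau\ge h$, after first choosing the side with $|P_Z|\ge h+\tau$. Such a side exists, since the total slack is $2h+\tau+$(the extra $\mu-h$ terms). If needed, I would instead use the cruder fact that $s\ge\max\{L,R\}$ together with surplus to obtain $s-d'\ge h$.
\item In the packed case $\min\{L,R\}-\mu\le\min\{c,d'\}$, both loads are at most $s$. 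The surplus gives $L+R-c-d'\ge 2h+\tau+2e_0$. If $s-d'<h$, then $d'>s-h\ge R-h$, so $L-c>h+\tau+2e_0$, and we swap the roles of $C$ and $D$ if necessary.
\end{itemize}
The cleanest route is probably a case split on which of $L-c$ and $R-d'$ is larger. We root on the side with the larger deficit $L-c$. Then $P_C\ge h$, and the other side has free order $s-d'\ge R-d'$, which is bounded below using the surplus combined with the packedness alternative. Since $s\ge L$, the free order on $C$ is also large.

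\emph{Invariant after embedding.} The new loads are $c_1=c+x$ and $d'_1=d'+y$ with $x+y=|K|\le\tau$.
\begin{itemize}[leftmargin=*]
\item If the first alternative held, then $\min$ only increases, so it still holds.
\item If $|c-d'|\le\tau$ held, the new difference may grow to $2\tau$. In that case I would aim to show the first alternative now holds: $\min\{c_1,d'_1\}\ge\min\{L,R\}-\mu$.
\end{itemize}
This last point is the main obstacle. It needs the freedom, given by the root-side choice, to place the root on the more loaded side, or on the side whose placement keeps the loads balanced. Concretely, I would try to root on the less loaded side whenever that side still has $|P_Z|\ge h$. If it does not, its load is already at least $L-e_0-h$ or so, which yields the first alternative via $\mu=h+\tau+2e_0$. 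This is exactly why $\mu$ contains the terms $h$, $\tau$ and $2e_0$.
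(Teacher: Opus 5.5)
\textbf{There is a genuine gap: your proposal does not show that the packedness invariant survives in the balanced case, and your suggested rule breaks it.} The overall frame is right: reduce to \Cref{lem:rootedpair}, split on the two packedness alternatives, and note that the first alternative is monotone in the loads. But rooting on the less loaded side does not control the imbalance, because the root class of $K$ may be the smaller class. For instance, take an integer $\tau\ge3$, $c=0$, $d'=\tau$, $L=R=s>\mu+1$, and let $K$ be a star on $\tau$ vertices rooted at its centre. Rooting in $C$ gives new loads $1$ and $2\tau-1$. The difference is then $2\tau-2>\tau$, while $\min\{c_1,d_1'\}=1<\min\{L,R\}-\mu$, so neither alternative survives. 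For the same reason, trying to prove that the first alternative holds after the step is the wrong target.

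The missing idea is to preserve the \emph{second} alternative by choosing the orientation of $K$. Let $a_K,b_K$ be the class sizes of $K$, and put $x=c-d'$ and $y=a_K-b_K$. The two possible new differences are $x\pm y$, and $\min\{|x+y|,|x-y|\}=\bigl||x|-|y|\bigr|\le\max\{|x|,|y|\}\le\tau$, because $|y|\le|K|\le\tau$. In words: send the larger class of $K$ to the less loaded side, whichever class contains the root. This freedom exists exactly when the first alternative fails. Then $\min\{c,d'\}<\min\{L,R\}-\mu$ and $\max\{c,d'\}\le\min\{c,d'\}+\tau$ give $L-c-e_0>\mu-\tau-e_0=h+e_0$, and likewise $R-d'-e_0>h+e_0$. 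So both root pools and both free sides have at least $h$ vertices, and either side may receive the root. Your remark that a small pool forces the first alternative is the contrapositive of this, but you never use it to unlock the choice of orientation.

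\textbf{The side-choice paragraph also needs repair.} The existence of a pool of order at least $h+\tau$ is not justified: the surplus only guarantees $(L-c-e_0)+(R-d'-e_0)\ge 2h+\tau$. Your fallback does work in the balanced case. Indeed $c\ge d'-\tau$ and the surplus give $2d'-\tau+h+\mu\le L+R\le 2s$, i.e.\ $s-d'\ge h+e_0$, and symmetrically for $c$. In the packed case, swapping the roles of $C$ and $D$ cannot help when $s-d'<h$, because \Cref{lem:rootedpair} needs at least $h$ free vertices on \emph{both} sides wherever the root goes. You must instead show that this case is impossible. Assume $L\le R$ without loss of generality. Packedness gives $c,d'\ge L-\mu$, and the surplus then gives $c+h\le L+R-d'-\mu\le R\le s$ and $d'+h\le L+R-c-\mu\le R\le s$.
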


\begin{proof}
First suppose the first alternative in \Cref{eq:oneedge-packed} holds.
By \Cref{eq:oneedge-surplus},
\[
 (L-c-e_0)+(R-d'-e_0)
 \ge h+\mu-2e_0=2h+\tau\ge2h,
\]
so at least one of $P_C,P_D$ has order at least $h$.  Suppose without loss
of generality that $L\le R$.  Packedness gives
$c,d'\ge L-\mu$, and \Cref{eq:oneedge-surplus} then gives
\[
 c+h\le L+R-d'-\mu\le R\le s,
 \qquad
 d'+h\le L+R-c-\mu\le R\le s.
\]
Thus both full unused sides have order at least $h$.

Now suppose the first alternative fails.  The second holds.  Put
$m=\min\{c,d'\}$ and $M=\max\{c,d'\}$.  Then
$m<\min\{L,R\}-\mu$ and $M\le m+\tau$.  On either side,
\[
 \text{threshold}-\text{load}-e_0
 >\mu-\tau-e_0=h+e_0\ge h.
\]
Both root pools and both full unused sides therefore have order at least
$h$.  If the bipartition classes of $K$ have orders $a_K,b_K$, choose their
orientation so that the new load difference is at most $\tau$.  Indeed, the
two possible differences are
$(c-d')\pm(a_K-b_K)$, and the smaller absolute value is at most
\[
 \max\{|c-d'|,|a_K-b_K|\}\le\tau.
\]

In the first case orient the root toward a pool of order at least $h$; in
the second use the balanced orientation just chosen.  Apply
\Cref{lem:rootedpair} with the root-side available set equal to the unused
part of the corresponding $A_C$ or $A_D$, the other available set equal to
the full unused opposite side, and the root set equal to the chosen pool.
The numerical hypothesis holds by assumption.  This gives the asserted
embedding and root-class containment.  In the first case the first
packedness alternative persists because loads only increase; in the second
case the chosen orientation preserves the second alternative.
\end{proof}

\begin{lemma}[Seed--shrub scheduling]\label{lem:seedshrubschedule}
Let $T$ be a rooted tree, let $S\subseteq V(T)$ contain its root, and let
$\mathcal D$ be the set of components of $T-S$.  Contract every member of
$\mathcal D$ to a single shrub-node while retaining the vertices of $S$.
The resulting graph is a rooted tree.  Every shrub-node has exactly one
incident edge toward the root; its endpoint in $S$ is the upper seed of the
shrub, and every other boundary seed is a child of the shrub-node.

Consequently, the following parent-before-child procedure terminates after
embedding all of $T$: start with the root seed; embed a shrub when its upper
seed has been embedded; after embedding that shrub, expose all of its lower
seeds; and embed a seed when its parent seed or parent shrub vertex has been
embedded.  If each local step uses fresh vertices and preserves every
required contact for the newly exposed lower seeds, the resulting map is an
injective embedding of $T$.
\end{lemma}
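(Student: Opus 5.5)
We prove the structural claim first and then read off the scheduling consequence. Let $T'$ be the graph obtained from $T$ by contracting each $K\in\mathcal D$ to a node $v_K$, keeping $S$. An edge of $T'$ is either an edge of $T[S]$, or an edge $xv_K$ whenever some $u\in V(K)$ has $xu\in E(T)$; there are no shrub-to-shrub edges, since distinct components of $T-S$ are not adjacent in $T$. To see that $T'$ is a tree, use that contracting connected vertex sets preserves connectivity, so $T'$ is connected. For acyclicity:
\begin{itemize}
 \item First, there are no parallel contracted edges. Two edges $xu,xu'$ with $u,u'\in K$ would create a cycle with the $u$--$u'$ path inside $K$, exactly as in the proof of \Cref{cor:fineconsequences}.
 \item Second, there are no longer cycles. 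Any cycle in $T'$ lifts to a cycle in $T$: replace each passage $x\,v_K\,x'$ by $x\,u\ldots u'\,x'$, with $u\ldots u'$ the path in $K$. The lifted walk visits distinct shrubs and distinct seeds, so it is a genuine cycle, which is impossible.
\end{itemize}
Hence $T'$ is a tree, and we root it at the root of $T$, which lies in $S$.

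In a rooted tree, every non-root node has exactly one edge toward the root. For a shrub-node $v_K$, this parent is a seed, because all neighbours of $v_K$ lie in $S$; call it the upper seed. Every other neighbour of $v_K$ is a seed child, and these are exactly the remaining members of $\partial_S K$.

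For the procedure, we induct along $T'$. Whenever a node's parent in $T'$ has been processed, the node becomes eligible:
\begin{itemize}
 \item a shrub becomes eligible once its upper seed is embedded;
 \item a seed child of $v_K$ becomes eligible once $K$ is embedded, since its parent vertex in $T$ lies in $K$;
 \item a seed child of a seed becomes eligible once that seed is embedded.
\end{itemize}
Since $T'$ is finite and every node is reached from the root, the procedure processes every node, so it embeds all of $V(T)=S\cup\bigcup V(K)$.

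It remains to check that the map is an embedding. Injectivity holds because each step uses fresh vertices. For edge preservation, take any edge $uv$ of $T$ and split into three cases:
\begin{itemize}
 \item If $uv$ lies inside a shrub, the local embedding of that shrub preserves it.
 \item If $uv$ joins a shrub vertex to its upper seed, it is a required contact when the shrub is embedded.
 \item If $uv$ joins a seed to its parent, whether that parent is a seed or a shrub vertex, it is a required contact when the lower endpoint is embedded.
\end{itemize}
Uniqueness of the parent edge of $v_K$ in $T'$, established above, ensures that these cases cover every edge of $T$.

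The one delicate point is the acyclicity lift, specifically excluding two edges from one seed into the same shrub. This is handled by the cycle argument in the first bullet of the acyclicity step.
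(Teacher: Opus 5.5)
Your proof is correct and follows essentially the same route as the paper: contraction preserves connectedness, the cycle argument rules out parallel edges and longer cycles so the quotient is a tree, rooting gives each shrub-node a unique parent seed, and induction on the quotient tree together with freshness and the contact conditions yields the injective embedding. You spell out a few steps the paper leaves implicit, namely the explicit lift of a quotient cycle to $T$ and the case check that every edge of $T$ is realized.
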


\begin{proof}
Contracting pairwise disjoint connected subgraphs of a tree preserves
connectedness and creates neither a cycle nor a loop.  Nor can it create
parallel edges: two edges from the same seed to one shrub, together with
the path inside the shrub, would form a cycle in $T$.  The quotient is
therefore a tree.  Its orientation toward the retained root gives every
nonroot vertex, including every shrub-node, a unique parent edge.  This is
the asserted upper incidence; all other incidences at that shrub-node point
to child seeds.  Induction on distance from the quotient root now proves
that the procedure exposes every quotient vertex.  The parent-before-child
embeddings inside each shrub then expose precisely the corresponding lower
seeds.  Freshness gives injectivity, and the stated contact condition
realizes every edge between a shrub and a lower seed.
\end{proof}

\begin{theorem}[Off-Tur\'an embedding]\label{thm:offturan}
Fix an integer $q\ge2$ and positive integers
\[
 1\le m_1\le\cdots\le m_{q+1}.
\]
Put
\[
 a=m_1,\qquad b=m_2,\qquad
 H=K_{a,b},\qquad F=K_{m_1,\ldots,m_{q+1}}.
\]
For every $\delta>0$ there exists
$n_0=n_0(q,m_1,\ldots,m_{q+1},\delta)$ such that the following holds.
Let $T$ be an $n$-vertex tree with $n\ge n_0$, put
\[
 r=R(T,H),\qquad N=q(r-1)+a,
\]
and let $\Gb$ be a graph on $N$ vertices.  If
\[
 F\nsubseteq\overline{\Gb}
\]
and
\begin{equation}\label{eq:offhyp}
 e(\Gb)\ge\binom N2-t_q(N)+\delta N^2,
\end{equation}
then $T\subseteq\Gb$.  The threshold is uniform over all $n$-vertex trees.
\end{theorem}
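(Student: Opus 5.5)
We will prove \Cref{thm:offturan} by applying regularity to $\Gb$ and locating a dense regular structure in the reduced graph that can host $T$. Fix small constants $1/n_0\ll\varepsilon\ll d\ll\delta$ and apply the prepartitioned regularity lemma of Hladk\'y and Piguet to $\Gb$. This gives an $(\varepsilon,\ell)$-regular partition with $\ell$ bounded in terms of $\varepsilon$ and cluster order $s\approx N/\ell$. Let $\Gamma$ be the reduced graph on $[\ell]$, with $ij$ an edge when $(V_i,V_j)$ is $\varepsilon$-regular of blue density at least $d$. Let $\overline\Gamma$ be the "red-reduced" graph, with $ij$ an edge when the pair is regular of red density at least $d$.

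\textbf{Step 1: transfer the two hypotheses to the reduced level.} First, $F\nsubseteq\overline{\Gb}$ together with the counting (key) lemma shows that $\overline\Gamma$ contains no $K_{q+1}$, and in fact is $F'$-free for a suitable blow-up. Otherwise a red $K_{q+1}$ among regular dense pairs would yield a red $K_{m_1,\ldots,m_{q+1}}$, since the parts of $F$ have bounded order. Second, \Cref{eq:offhyp} passes to $\Gamma$ as
\[
 e(\Gamma)\ge\binom{\ell}{2}-t_q(\ell)+(\delta/2)\ell^2 .
\]
Every pair of clusters is regular except for $\varepsilon\ell^2$ pairs, and each regular pair is blue-dense or red-dense. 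Hence $\Gamma\cup\overline\Gamma$ covers almost all pairs, and $\overline\Gamma$ is $K_{q+1}$-free, so $e(\overline\Gamma)\le t_q(\ell)$ by Tur\'an's theorem. The excess $\delta\ell^2$ in $\Gamma$ therefore forces a structural gain. From it we extract a connected component of $\Gamma$ containing a matching $M$ whose clusters cover clearly more than the $r$-type amount $n\ell/N$. The target is a matching covering at least $(1+\eta)n/s$ vertices, where $\eta=\eta(\delta)>0$.

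\textbf{Step 2: the reduced-graph target.} We must show that $\Gamma$ has a component $\mathcal C$ with a matching $M$ whose total cluster weight comfortably exceeds $n$. We use \Cref{prop:efrs} to note $r=n+o(n)$, so $N=qn+o(n)$. Then $\ell$ clusters represent about $qn$ vertices, and we need one blue component that is "larger than $1/q$ of everything" with a large matching. The argument goes through the complement. If every component of $\Gamma$ had small matching number, a Gallai--Edmonds / K\H{o}nig-type cover, together with the complement being nearly $K_{q+1}$-free, would bound $e(\Gamma)$ by $\binom{\ell}{2}-t_q(\ell)+o(\ell^2)$, contradicting the $\delta\ell^2$ surplus. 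This extremal computation is where we expect the main obstacle. Components with small vertex covers contribute blue edges only through the cover, and the non-cover parts are red-cliques at the reduced level. Summing, we need a convexity inequality showing that deficiency forces a red $K_{q+1}$ or too few blue edges. We would first try the case analysis "some component has more than $\ell/q+\eta\ell$ clusters" versus "all components are small". In the small case, the inter-component pairs are all red, so $\overline\Gamma$ contains a complete multipartite graph on the components. $K_{q+1}$-freeness then forces at most $q$ components, and the surplus contradicts that.

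\textbf{Step 3: embedding.} Given the matching $M=\{(C_j,D_j)\}_{j\in J}$ in one component of $\Gamma$, take a $k$-fine partition of $T$ from \Cref{cor:fineconsequences} with $k=\beta s$ for small $\beta$, so the seed sets have size $O(n/s)=O(\ell)$. Seeds are embedded into high-degree typical vertices of clusters along connecting paths in the component; because seeds are few, these paths cost $o(n)$. Shrubs routed to $X$ and to $Y$ are assigned to matching edges by \Cref{lem:wholeedgeallocation}, with weights equal to the pair capacities and $t=\sqrt{3|J|}s$. The slack $\eta n$ makes $\tfrac{a_X+t}{W_X}+\tfrac{a_Y+t}{W_Y}<1$ hold. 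Within each matching edge, shrubs are placed one at a time by \Cref{lem:oneedgestate}, which maintains the packedness alternative so that loads on $C_j$ and $D_j$ stay balanced. The overall order of placement follows \Cref{lem:seedshrubschedule}, and the root-class control in \Cref{lem:oneedgestate} guarantees that each lower seed finds its contact. Injectivity follows from freshness, giving $T\subseteq\Gb$ uniformly over trees once $n_0$ is large.
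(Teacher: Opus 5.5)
\textbf{There is a genuine gap: your Step 2 target is too weak for Step 3.} A connected matching in one blue component covering $(1+\eta)n$ vertices is the right host for paths (the {\L}uczak method), but not for arbitrary trees. Consider a reduced graph that is a long path of clusters with consecutive pairs dense. It satisfies your Step 2 conclusion, yet every vertex has blue degree about $2s\ll n$, so not even a star embeds.

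The obstruction appears in the fine partition. One seed can carry shrubs of total order close to $n$: for a star, the centre is the only seed and every leaf is a shrub. Every shrub root must be a blue neighbour of the image of its upper seed. So a seed image needs blue degree into the host clusters exceeding the total order of its shrubs. Nothing in Steps 1--2 produces such vertices. Placing seeds ``along connecting paths'' makes this worse, because seeds in different clusters have different neighbourhoods. There is then no single pair of weight functions $w_X,w_Y$ to feed into \Cref{lem:wholeedgeallocation}. That lemma needs $\sum_j w_X(j)$ and $\sum_j w_Y(j)$ \emph{each} to exceed roughly $R_X+R_Y$ plus slack. In other words, it needs two hosts, each seeing more than $n$ of matching weight, which your ``pair capacities'' do not supply.

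\textbf{The missing idea is to concentrate all seeds in two adjacent heavy clusters.} You essentially already have the needed fact but use it only to count components. Every regular pair that is not blue-dense is red-dense, so an independent set of $\Gamma$ of order $\eta\ell$ contains a red-dense $(q+1)$-clique of regular pairs, and hence a red $F$. So $\alpha(\Gamma)<\eta\ell$. This fact does two jobs:
\begin{itemize}
\item A maximal matching in $\Gamma-\{X,Y\}$ then misses fewer than $\eta\ell$ clusters. No Gallai--Edmonds computation and no connectivity are needed.
\item Averaging the normalized degrees $D(C)$ shows that at least $20\eta\ell>\alpha(\Gamma)$ clusters satisfy $D(C)\ge n+80\eta N$. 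So two of them, $X$ and $Y$, are adjacent.
\end{itemize}

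The paper then proceeds as follows.
\begin{itemize}
\item It maps $S_X$ into a typical core $X^\circ$ and $S_Y$ into $Y^\circ$. Each core vertex is typical to all but $\Theta$ matching clusters, so every seed image in $X^\circ$ sees almost all of $\sum_j w_X(j)\ge n+78\eta N$.
\item Whole-edge allocation splits the matching between the two heads.
\item Seed--seed edges of $T[S]$ use the pair $XY$.
\item Lower seeds find their contacts because the shrub's root class is placed in retained sets with many neighbours in the head core.
\end{itemize}
This double-star-of-clusters structure, rather than a connected matching, is what lets an arbitrary tree embed.
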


\begin{proof}
Write $f=e(F)$.  By \Cref{prop:efrs}, uniformly in $T$,
\begin{equation}\label{eq:offscale}
 r=n+o(n),\qquad N=qn+o(n),\qquad \frac Nq=n+o(N).
\end{equation}
Throughout this proof, every $o(N)$ term is uniform over the choice of the
$n$-vertex tree $T$.  This follows from \Cref{prop:efrs} and from the fact
that every regularity parameter below depends only on $q,F,\delta$.
Since
\[
 \frac{2}{N}\left(\binom N2-t_q(N)\right)
 =\frac Nq-1+O_q(N^{-1}),
\]
\Cref{eq:offhyp} gives
\begin{equation}\label{eq:offav}
 \overline d(\Gb)\ge n+2\delta N+o(N).
\end{equation}

Choose $\eta>0$ so small that
\begin{equation}\label{eq:etachoice}
 \eta<\min\left\{
 \frac{\delta}{200},
 \frac{q-1}{200q},
 \frac{1}{100(f+1)},
 10^{-4}
 \right\}.
\end{equation}
Choose $m_0$ so large that
\begin{equation}\label{eq:m0choice}
 \frac2{m_0}<\eta,
 \qquad \frac1{m_0}<\eta^2,
 \qquad \eta m_0>4q.
\end{equation}
Now choose
\begin{equation}\label{eq:epschoice}
 0<\varepsilon<\min\left\{
 \frac{\eta^3}{400},
 \frac{\eta^2}{8q},
 \frac{\eta}{100}
 \right\}.
\end{equation}
By \Cref{eq:offav}, for all sufficiently large $n$,
\begin{equation}\label{eq:offbigav}
 \overline d(\Gb)\ge n+200\eta N,
 \qquad
 n+80\eta N<N.
\end{equation}

Apply Hladk\'y--Piguet's regularity lemma
\cite[Theorem~5.11]{HladkyPiguet2016} with the trivial one-class
prepartition and the already fixed parameters $\varepsilon,m_0$.  It gives
an $(\varepsilon,\ell)$-regular partition
\[
 V(\Gb)=V_0\mathbin{\dot\cup}V_1\mathbin{\dot\cup}\cdots
       \mathbin{\dot\cup}V_\ell,
\]
where
\[
 m_0\le\ell\le L,
 \qquad |V_0|<\varepsilon N,
 \qquad |V_1|=\cdots=|V_\ell|=s.
\]
Let $\Gb^*$ be obtained by deleting the edges incident with $V_0$, the
edges internal to clusters, the edges in irregular pairs, and the edges in
regular pairs of blue density less than $\eta$.  The four deletion bounds
are
\[
 \varepsilon N^2,
 \qquad
 \ell\binom{s}{2}\le\frac{N^2}{2m_0},
 \qquad
 \varepsilon\ell^2s^2\le\varepsilon N^2,
 \qquad
 \eta\binom{\ell}{2}s^2\le\frac{\eta N^2}{2}.
\]
Thus
\[
 \overline d(\Gb)-\overline d(\Gb^*)
 \le\left(4\varepsilon+\frac1{m_0}+\eta\right)N<2\eta N.
\]
With $\mathcal P=\{V_1,\ldots,V_\ell\}$, put
\[
 D(C)=\frac{e_{\Gb^*}(C,V(\Gb))}{s}.
\]
Every edge of $\Gb^*$ joins two clusters, and therefore
\begin{equation}\label{eq:Bstarav}
 (n+100\eta N)\ell\le\sum_{C\in\mathcal P}D(C),
 \qquad
 \sum_{C\in\mathcal P}D(C)=\frac{2e(\Gb^*)}{s}.
\end{equation}
Indeed, the average of the $D(C)$ is at least
$2e(\Gb^*)/N\ge n+198\eta N$.  Also $0\le D(C)\le N$.

Let $Q$ be the graph on $\mathcal P$ whose edges are the regular pairs of
blue density at least $\eta$.  We claim
\begin{equation}\label{eq:alphaQ}
 \alpha(Q)<\eta\ell.
\end{equation}
If an independent set $I$ has order $p\ge\eta\ell$, then
$p>4q$ by \Cref{eq:m0choice}.  The graph of regular pairs on $I$ has at
least $\binom p2-\varepsilon\ell^2$ edges, while
\[
 \binom p2-t_q(p)
 \ge\frac{p^2}{2q}-\frac p2
 \ge\frac{p^2}{4q}
 \ge\frac{\eta^2\ell^2}{4q}
 >\varepsilon\ell^2.
\]
It contains a $(q+1)$-clique.  All its pairs have blue density less than
$\eta$, because $I$ is independent in $Q$.  Independently for the $q+1$
clusters, choose a uniformly random $m_i$-subset of the $i$th cluster (or,
equivalently, a uniformly random ordered $m_i$-tuple of distinct vertices).
For each required cross-edge, its two selected endpoints are marginally
uniform in the corresponding pair of clusters, so the probability that the
edge is blue is less than $\eta$.  A union bound over the $f$ required
cross-edges gives probability less than $f\eta<1$ of seeing any blue
required edge.  The cluster order tends to infinity because $\ell\le L$,
so the prescribed fixed subset orders are available.  A choice with no
blue required edge therefore gives a red $F$, a contradiction.

Let
\[
 \mathcal A=\{C\in\mathcal P:D(C)\ge n+80\eta N\},
 \qquad \theta=\frac{|\mathcal A|}{\ell}.
\]
By \Cref{eq:Bstarav},
\[
 n+100\eta N
 \le\theta N+(1-\theta)(n+80\eta N),
\]
and hence $\theta\ge20\eta$.  Thus
$|\mathcal A|>\alpha(Q)$, so $Q[\mathcal A]$ has an edge $XY$.

Take a maximal matching
\[
 \mathcal M=\{C_jD_j:j\in J\}
\]
in $Q-\{X,Y\}$, and let $U$ be the union of its endpoint clusters.  The
vertices of $Q-\{X,Y\}$ left unmatched by $\mathcal M$ form an independent
set, so fewer than $\eta\ell$ of them are unmatched.
Consequently
\[
 |(V_1\cup\cdots\cup V_\ell)\setminus U|
 \le(\eta\ell+2)s
 \le\eta N+\frac{2N}{m_0}<2\eta N.
\]
For $Z\in\{X,Y\}$ and $C\in\mathcal P$, define
\[
 p_Z(C)=
 \begin{cases}
  e_{\Gb}(Z,C)/s^2,&ZC\in E(Q),\\
  0,&ZC\notin E(Q),
 \end{cases}
\]
and put
\begin{equation}\label{eq:wholeweights}
 w_Z(j)=s\bigl(p_Z(C_j)+p_Z(D_j)\bigr).
\end{equation}
Removing the clusters outside $U$ from the normalized degree of either
heavy head costs less than $2\eta N$.  Hence
\begin{equation}\label{eq:headdegrees}
 \sum_{j\in J}w_X(j)\ge n+78\eta N,
 \qquad
 \sum_{j\in J}w_Y(j)\ge n+78\eta N.
\end{equation}

Set
\[
 \tau_0=\frac{\eta^2}{128L},
 \qquad
 k_T=\lfloor\tau_0n\rfloor.
\]
For all sufficiently large $n$, $1\le k_T<n-1$.  Root $T$, apply
\Cref{cor:fineconsequences} with $k=k_T$, and denote the resulting
$k_T$-fine partition by
\[
 (W_X,W_Y,\mathcal D_X,\mathcal D_Y).
\]
Put
\[
 S=W_X\cup W_Y,
 \qquad R_X=\sum_{K\in\mathcal D_X}|V(K)|,
 \qquad R_Y=\sum_{K\in\mathcal D_Y}|V(K)|.
\]
The members of $\mathcal D_X\cup\mathcal D_Y$ are the components of $T-S$;
we call them shrubs.  In particular, \Cref{cor:fineconsequences} says that
every shrub has order at most $k_T$ and has at least one boundary seed in
$S$, all its boundary seeds lie in the same one of $W_X,W_Y$, and all their
neighbors in the shrub lie in the same shrub bipartition class.  For all
sufficiently large $n$, $k_T\ge\tau_0n/2$, and hence
\begin{equation}\label{eq:seedbounds}
 |S|\le\frac{24(n-1)}{k_T}\le\frac{48}{\tau_0},
 \qquad R_X+R_Y=n-|S|\le n.
\end{equation}

Since $\ell s=N-|V_0|>(1-\varepsilon)N$, $\ell\le L$, and
$N/n\to q\ge2$, we have $s\ge n/(4L)$ for all sufficiently large $n$.
Consequently
\[
 k_T\le\tau_0n=\frac{\eta^2n}{128L}\le\frac{\eta^2s}{32}.
\]
Define
\begin{equation}\label{eq:localparameters}
 \tau=\frac{\eta^2s}{32},
 \qquad h=\left\lceil\frac{\eta s}{8}\right\rceil,
 \qquad e_0=\varepsilon s,
 \qquad \mu=h+\tau+2e_0.
\end{equation}
Indeed, $\varepsilon<\eta^3/400$ and
$\varepsilon<\eta/100$, while $h\ge\eta s/8$, so
\[
 \varepsilon s+\tau
 <\left(\frac{\eta^3}{400}+\frac{\eta^2}{32}\right)s
 <\frac{49\eta^2}{400}s
 <(\eta-2\varepsilon)\frac{\eta s}{8}
 \le(\eta-2\varepsilon)h.
\]
Thus
\begin{equation}\label{eq:state-num2}
 \varepsilon s+\tau\le(\eta-2\varepsilon)h.
\end{equation}

We now construct the fixed head cores and the one-sided contact sets.  Let
$\mathcal T$ be the set of the $2|J|$ matching endpoint clusters and put
\[
 \Theta=\frac{8\varepsilon\ell}{\eta}.
\]
For $Z\in\{X,Y\}$ and $z\in Z$, call an endpoint $C\in\mathcal T$ bad for
$z$ if
\[
 d_{\Gb}(z,C)<(p_Z(C)-\varepsilon)s.
\]
For each endpoint, regularity shows that at most $\varepsilon s$ vertices
of $Z$ call it bad; endpoints with $p_Z(C)=0$ are never bad.  Therefore the
sum of the bad counts over $z\in Z$ is at most
$|\mathcal T|\varepsilon s\le\ell\varepsilon s$.  Let $Z^\circ$ consist of
the vertices of $Z$ that have at least $(\eta-\varepsilon)s$ neighbors in
the other head cluster and have bad count at most $\Theta$.  Regularity of
$XY$ and Markov's inequality give
\begin{equation}\label{eq:headcoreloss}
 |Z\setminus Z^\circ|
 \le\varepsilon s+\frac{\ell\varepsilon s}{\Theta}
 =\left(\varepsilon+\frac\eta8\right)s.
\end{equation}
Consequently
\begin{equation}\label{eq:headcoresize}
 |Z^\circ|\ge\left(1-\varepsilon-\frac\eta8\right)s>\varepsilon s.
\end{equation}
By \Cref{eq:seedbounds}, for all sufficiently large $n$,
\begin{equation}\label{eq:seedroom}
 |S|<\varepsilon|Z^\circ|,
 \qquad |S|<|Z^\circ|,
\end{equation}
and every vertex in one head core has more than $|S|$ neighbors in the
other.  The last assertion follows by subtracting
\Cref{eq:headcoreloss} from its $(\eta-\varepsilon)s$ neighbors in the full
opposite cluster.

For an endpoint $C\in\mathcal T$, define the trimmed threshold
\begin{equation}\label{eq:trimthreshold}
 t_Z(C)=\max\{0,\,p_Z(C)s-2\varepsilon s\}.
\end{equation}
If $t_Z(C)>0$, let
\[
 R_Z(C)=\{v\in C:d_{\Gb}(v,Z^\circ)\ge(p_Z(C)-\varepsilon)|Z^\circ|\};
\]
if $t_Z(C)=0$, let $R_Z(C)=\varnothing$.  A positive threshold implies
$ZC\in E(Q)$ and $p_Z(C)>2\varepsilon$.  Since
$|Z^\circ|>\varepsilon s$ by \Cref{eq:headcoresize}, regularity gives
\begin{equation}\label{eq:retainedloss}
 |C\setminus R_Z(C)|\le\varepsilon s=e_0
 \quad\text{when }t_Z(C)>0.
\end{equation}
Every vertex of a nonempty $R_Z(C)$ has more than
$\varepsilon|Z^\circ|>|S|$ neighbors in $Z^\circ$.

It remains to assign whole matching edges to the two heads.  Put
\begin{equation}\label{eq:C0}
 C_0=2s\Theta+|J|(h+\mu)+2\varepsilon N.
\end{equation}
Here $2s\Theta$ pays for all edges bad for a chosen seed image,
$|J|(h+\mu)$ is one local reserve per edge, and $2\varepsilon N$ pays for
the two endpoint trimmings in \Cref{eq:trimthreshold}.  Since
$|J|\le\ell/2$, $\ell s\le N$, and $h\le\eta s/8+1$,
\begin{align*}
 C_0
 &\le \frac{16\varepsilon}{\eta}N
   +\frac\ell2\left(\frac{\eta s}{4}+2
       +\frac{\eta^2s}{32}+2\varepsilon s\right)
   +2\varepsilon N\\
 &\le \frac{16\varepsilon}{\eta}N
   +\frac{\eta N}{8}+\ell+\frac{\eta^2N}{64}
   +3\varepsilon N
 <3\eta N                                                     \tag{\(*\)}\label{eq:C0bound}
\end{align*}
for all sufficiently large $n$.

Let $a_X=R_X+C_0$, $a_Y=R_Y+C_0$, and $t=\eta N$.  By
\Cref{eq:seedbounds,eq:C0bound},
\[
 \begin{aligned}
 (a_X+t)+(a_Y+t)
 &=R_X+R_Y+2C_0+2\eta N\\
 &<n+8\eta N.
 \end{aligned}
\]
The two denominators in \Cref{eq:headdegrees} are at least
$n+78\eta N$.  Therefore
\begin{equation}\label{eq:ratio-room}
 \frac{a_X+t}{\sum_jw_X(j)}+
 \frac{a_Y+t}{\sum_jw_Y(j)}
 \le\frac{(a_X+t)+(a_Y+t)}{n+78\eta N}<1.
\end{equation}
Moreover,
\[
 2|J|s^2\le\ell s^2\le\frac{N^2}{m_0}<t^2.
\]
Apply \Cref{lem:wholeedgeallocation} with demands $R_X,R_Y$ and reserve
$C_0$.  It gives a partition $J=J_X\mathbin{\dot\cup}J_Y$ satisfying
\begin{equation}\label{eq:whole-edge-supply}
 R_Z+C_0\le\sum_{j\in J_Z}w_Z(j)
 \qquad(Z=X,Y).
\end{equation}

We finish with the stateful embedding.  Apply
\Cref{lem:seedshrubschedule} to $S$ and the shrubs.  This supplies the unique
upper boundary incidence of every shrub, the corresponding root of the
shrub, and the parent-before-child schedule.  A shrub is ready when its upper
seed has been embedded; its other boundary seeds are lower seeds.  Map
$W_X$ into $X^\circ$ and $W_Y$ into $Y^\circ$ as the seeds become ready.

For $j\in J_Z$, write
\[
 L_j=t_Z(C_j),\qquad R_j=t_Z(D_j),
\]
let $U_{C,j}\subseteq C_j$ and $U_{D,j}\subseteq D_j$ be the current used
sets, and put $u_j=|U_{C,j}|$ and $v_j=|U_{D,j}|$.
Maintain the packedness invariant
\begin{equation}\label{eq:packed}
 \min\{L_j,R_j\}-\mu\le\min\{u_j,v_j\}
 \quad\text{or}\quad
 |u_j-v_j|\le\tau.
\end{equation}
It holds initially by the second alternative.

Suppose a shrub $K$ routed to $Z$ is ready and its upper seed has image
$z\in Z^\circ$.  Let $J_Z(z)$ be the allocated edges for which neither endpoint
is bad for $z$.  Since the matching endpoints are distinct,
\[
 |J_Z\setminus J_Z(z)|\le\Theta.
\]
For a good endpoint, $z$ has at least its trimmed threshold many neighbors.
Moreover, trimming both endpoints of all allocated edges loses at most
$2\varepsilon N$, while deleting the bad edges loses at most $2s\Theta$.
Thus \Cref{eq:C0,eq:whole-edge-supply} give the explicit calculation
\begin{equation}\label{eq:static-surplus}
 \begin{aligned}
 \sum_{j\in J_Z(z)}(L_j+R_j)
 &\ge \sum_{j\in J_Z}w_Z(j)-2s\Theta-2\varepsilon N\\
 &\ge R_Z+|J|(h+\mu)\\
 &\ge R_Z+|J_Z(z)|(h+\mu).
 \end{aligned}
\end{equation}
All vertices already used in matching regions assigned to $Z$ belong to
shrubs routed to $Z$, so their total number is at most $R_Z$.  Hence
\Cref{eq:static-surplus} implies
\[
 \sum_{j\in J_Z(z)}(u_j+v_j)
   +|J_Z(z)|(h+\mu)
 \le\sum_{j\in J_Z(z)}(L_j+R_j).
\]
The ready shrub $K$ is routed to $Z$, so $R_Z\ge |K|\ge1$.  If $J_Z(z)$
were empty, \Cref{eq:static-surplus} would therefore read $R_Z\le0$.
Thus $J_Z(z)$ is nonempty, and some $j\in J_Z(z)$ satisfies
\begin{equation}\label{eq:local-surplus}
 u_j+v_j+h+\mu\le L_j+R_j.
\end{equation}

Inside the retained sets define the two unused root pools
\[
 P_C=(N_{\Gb}(z)\cap R_Z(C_j))\setminus U_{C,j},\qquad
 P_D=(N_{\Gb}(z)\cap R_Z(D_j))\setminus U_{D,j}.
\]
By goodness of the selected edge for $z$ and \Cref{eq:retainedloss}, their
respective orders are at least
\begin{equation}\label{eq:rootpools}
 L_j-u_j-e_0,
 \qquad R_j-v_j-e_0.
\end{equation}
For example, when $L_j>0$, goodness of $C_j$ for $z$ and
\Cref{eq:retainedloss} give
\[
 |P_C|\ge(p_Z(C_j)-\varepsilon)s-e_0-u_j
 =L_j-u_j\ge L_j-u_j-e_0;
\]
the calculation for $P_D$ is identical.
When a threshold is zero its retained set is empty and the corresponding
lower bound is nonpositive, so this assertion is also valid in that case.
Apply \Cref{lem:oneedgestate} to $(C_j,D_j)$ with density parameter
$d=\eta$, used sets $U_{C,j},U_{D,j}$, thresholds $L_j,R_j$, retained sets
$R_Z(C_j),R_Z(D_j)$, and root pools $P_C,P_D$.  Its size and regular-pair
hypotheses follow from $|K|\le k_T\le\tau$ and
\Cref{eq:state-num2}; its surplus and packedness hypotheses are
\Cref{eq:local-surplus,eq:packed}.  The lemma embeds $K$, preserves
\Cref{eq:packed} after updating the used sets and their loads, and places
the entire root bipartition class in the
corresponding retained set.  All lower
boundary seeds of $K$ lie in the same class of the bipartition of $T$ as
its upper boundary seed.  Consequently, by the final parity assertion of
\Cref{cor:fineconsequences}, their respective neighbors in $K$ lie in the
same bipartition class of $K$, namely the class containing the root of $K$.
Those neighbors therefore lie in the retained root class and retain
more than $|S|$ neighbors in the appropriate head core $Z^\circ$.

For a ready seed, there are three possibilities.  The root seed may be put in
any unused vertex of its head core.  A seed whose parent is a seed uses an
unused cross-head neighbor; the parent and child lie in opposite head cores
by \Cref{cor:fineconsequences}.  A seed whose parent lies in a shrub uses an
unused neighbor supplied by the retained contact set.  In every case
\Cref{eq:seedroom} and the fact that fewer than $|S|$ seeds have already been
embedded leave a choice.  Matching regions are pairwise disjoint and disjoint
from the two head clusters, so all extensions are injective.  The contact
condition required by \Cref{lem:seedshrubschedule} was verified in the
preceding paragraph.  That lemma now shows that the schedule terminates only
after every vertex has been embedded.  This gives $T\subseteq\Gb$.

Finally, the dependency order is
\[
 (q,F,\delta)\longrightarrow\eta\longrightarrow m_0
 \longrightarrow\varepsilon\longrightarrow L
 \longrightarrow\tau_0\longrightarrow n_0.
\]
Thus $L$ and $\tau_0$ are independent of $T$, while $k_T,h,\tau,\mu$ are
prescribed by the displayed formulas.  The opening uniformity convention
therefore permits one $n_0$ for all the large-$n$ requirements and all
$n$-vertex trees.
\end{proof}

\begin{corollary}[Near-Tur\'an red density]\label{cor:nearTuran}
With the notation of \Cref{thm:offturan}, suppose that $\Gb$ is $T$-free
and $\Gr:=\overline{\Gb}$ is $F$-free.  Then, uniformly over all
$n$-vertex trees,
\[
 e(\Gr)\ge t_q(N)-o(N^2).
\]
\end{corollary}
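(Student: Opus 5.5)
This is the contrapositive of \Cref{thm:offturan}, read off quantitatively. We argue by contradiction with a fixed $\delta>0$. Suppose the claim fails uniformly. Then there are $\delta>0$, tree orders $n\to\infty$, trees $T_n$ on $n$ vertices, and graphs $\Gb$ on $N=q(R(T_n,H)-1)+a$ vertices such that $\Gb$ is $T_n$-free, $\overline{\Gb}$ is $F$-free, and
\[
 e(\Gr)<t_q(N)-\delta N^2 .
\]
Uniformity over trees means exactly that such a sequence cannot exist for any fixed $\delta$. Equivalently, for each $\delta$ we will exhibit one threshold $n_0$ beyond which $e(\Gr)\ge t_q(N)-\delta N^2$ for every admissible $T,\Gb$.

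Since $\Gb$ and $\Gr$ partition the edges of $K_N$, we have $e(\Gb)=\binom N2-e(\Gr)$. The assumed deficit therefore gives
\[
 e(\Gb)>\binom N2-t_q(N)+\delta N^2,
\]
which is hypothesis \Cref{eq:offhyp}. The other hypothesis, $F\nsubseteq\overline{\Gb}=\Gr$, holds by assumption. So once $n\ge n_0(q,m_1,\ldots,m_{q+1},\delta)$ from \Cref{thm:offturan}, that theorem gives $T\subseteq\Gb$, contradicting $T$-freeness. This threshold depends only on $\delta$ and the part sizes, not on $T$, which is precisely the uniformity asserted.

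Stated in $o(\cdot)$ form: for every $\delta>0$ there is $n_0$ such that $e(\Gr)\ge t_q(N)-\delta N^2$ whenever $n\ge n_0$, uniformly over all $n$-vertex trees. Since $N=qn+o(n)$ by \Cref{prop:efrs}, the statements "$n$ large" and "$N$ large" are interchangeable.

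There is essentially no obstacle here. The only points to check are that the inequality direction matches \Cref{eq:offhyp}, including the non-strict versus strict inequality, which is harmless because we may replace $\delta$ by $\delta/2$. We should also observe that $N$ is defined exactly as in the theorem, so the corollary inherits the setting of \Cref{thm:offturan} verbatim.
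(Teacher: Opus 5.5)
Your proposal is correct and follows the paper's proof: for each fixed $\delta>0$ you apply \Cref{thm:offturan} in contrapositive form, with its tree-independent threshold and the identity $e(\Gb)=\binom N2-e(\Gr)$. The paper also diagonalizes over $\delta=1/j$ to produce an explicit uniform sequence $\delta_n\to0$, which is equivalent to your ``for every $\delta$ there is $n_0$'' formulation.
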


\begin{proof}
For every fixed $\delta>0$, \Cref{thm:offturan} implies, for all sufficiently
large $n$,
\[
 e(\Gb)<\binom N2-t_q(N)+\delta N^2.
\]
Thus $e(\Gr)>t_q(N)-\delta N^2$.  To make the uniform $o(N^2)$ term
explicit, let $n_{\mathrm{off}}(1/j)$ be the uniform threshold supplied by
\Cref{thm:offturan} with $\delta=1/j$, put $n_0=0$, and define recursively
\[
 n_j=\max\{n_{j-1}+1,n_{\mathrm{off}}(1/j)\}\qquad(j\ge1).
\]
Define $\delta_n=1$ for $n<n_1$ and
\[
 \delta_n=\frac1j
 \qquad\text{when }n_j\le n<n_{j+1}.
\]
Then $\delta_n\to0$, uniformly over the $n$-vertex tree $T$, and for every
$n\ge n_1$ the preceding inequality is exactly
\[
 e(\Gr)\ge t_q(N)-\delta_nN^2.
\]
\end{proof}

\section{Rooted forest embedding and reservoir profiles}\label{sec:treeprofile}

We first isolate the elementary arbitrary-tree embedding used after the
stability step.

\begin{lemma}[Prescribed-root forest embedding]\label{lem:rootedforest}
Let $\mathcal F$ be a forest on $t$ vertices, with one distinguished root in
each component.  If a graph $J$ satisfies $\delta(J)\ge t-1$, then every
injection of the roots of $\mathcal F$ into $V(J)$ extends to an embedding of
$\mathcal F$ in $J$.
\end{lemma}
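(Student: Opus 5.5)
We argue by a greedy parent-before-child embedding, exactly as in the classical fact that a graph of minimum degree at least $t-1$ contains every tree on $t$ vertices. First, we may assume $\mathcal F$ is nonempty; if $t=0$ there is nothing to prove. Let the components of $\mathcal F$ be $\mathcal F_1,\ldots,\mathcal F_c$ with roots $\rho_1,\ldots,\rho_c$, and let $\phi$ be the prescribed injection of $\{\rho_1,\ldots,\rho_c\}$ into $V(J)$. We begin by placing every root at its prescribed image, so that $c$ vertices of $J$ are used at the outset. Note that $c\le t$, so the injection is consistent with $|V(J)|$; indeed $\delta(J)\ge t-1$ forces $|V(J)|\ge t$ whenever $J$ is nonempty, and the existence of the injection already gives $|V(J)|\ge c$.

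Next, fix an ordering $v_1,\ldots,v_{t-c}$ of the nonroot vertices of $\mathcal F$ in which every vertex appears after its parent. Here the parent is taken with respect to the root of its component; a breadth-first order of each component, concatenated, will do. We embed these vertices one at a time. When $v_i$ is to be embedded, its parent $p$ is already embedded, either as a root or as an earlier $v_{i'}$. At that moment exactly $c+i-1\le t-1$ vertices of $J$ are in use, one of which is the image of $p$ itself. Hence at most $t-2$ used vertices can lie in $N_J(\phi(p))$, while $d_J(\phi(p))\ge t-1$. So some neighbor of $\phi(p)$ is unused, and we map $v_i$ to it.

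Each step uses a fresh vertex, so the map is injective. Every edge of $\mathcal F$ joins a nonroot vertex to its parent, and each such edge is realized at the moment the child is embedded. Since the roots were never moved, the resulting embedding extends $\phi$.

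There is no real obstacle. The only point requiring care is the count showing that the number of already-used vertices in $N_J(\phi(p))$ is at most $t-2$ rather than $t-1$. This works because the parent's own image is used but is not its own neighbor, and because we embed all roots first, so the count of used vertices is uniform across components. If one preferred to interleave components, the same count applies verbatim, since the total number of used vertices never exceeds $t-1$ before the last placement.
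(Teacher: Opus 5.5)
Your proof is correct and is essentially the paper's argument. Both pre-embed the roots and then embed the nonroot vertices in parent-before-child order. At each step at most $t-2$ used vertices other than the parent's image are forbidden, while the parent's image has at least $t-1$ neighbors, so an unused neighbor always remains.
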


\begin{proof}
Root every component at its distinguished vertex and order the nonroot
vertices so that every vertex appears after its parent.  Pre-embed the
roots.  Immediately before a nonroot vertex is embedded, at most $t-2$
already used vertices other than its parent image are forbidden.  The
parent image has at least $t-1$ neighbors, so an unused neighbor remains.
\end{proof}

\begin{lemma}[Count-and-load allocation]\label{lem:allocation}
Fix $q\ge2$ and $\omega>0$.  There are constants
$\kappa=\kappa(\omega)>0$ and $\delta_0=\delta_0(\omega)>0$ with the
following property.  Let $s_1,\ldots,s_d$ be positive integers satisfying
\[
 s_j\le \frac n2,
 \qquad
 \sum_{j=1}^d s_j=n-1,
\]
and let $c_1,\ldots,c_q$ be nonnegative integers satisfying
\[
 c_i\le(1+\delta_0)n
 \quad(i\in[q]),
 \qquad
 \sum_{i=1}^q c_i\ge(1+\omega)n.
\]
Then $[d]$ has a partition
$I_1\mathbin{\dot\cup}\cdots\mathbin{\dot\cup}I_q$ such that
\[
 |I_i|\le c_i
 \quad\text{and}\quad
 \sum_{j\in I_i}s_j\le(1-\kappa)n
 \qquad(i\in[q]).
\]
\end{lemma}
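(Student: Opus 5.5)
The plan is to construct the partition with one distinguished bin, with parameters that depend only on $\omega$. I would take
\[
 \delta_0=\omega/4,\qquad \kappa=\min\{\omega/8,1/8\}.
\]
Let $i^*$ be an index maximizing $c_i$, and put $C'=\sum_{i\ne i^*}c_i$. The hypotheses $c_{i^*}\le(1+\delta_0)n$ and $\sum_ic_i\ge(1+\omega)n$ give
\[
 C'\ge(\omega-\delta_0)n=\tfrac34\omega n .
\]
Moreover $d\le\sum_js_j=n-1$, so
\[
 d-c_{i^*}\le C'-\omega n-1 .
\]
So the nondistinguished bins have slack: their total count capacity exceeds what they are forced to absorb by a linear amount. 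The number $q$ of bins never enters except through these two totals, and this is what keeps $\kappa$ independent of $q$.

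Next I would order the items increasingly, $s_{(1)}\le\cdots\le s_{(d)}$. The nondistinguished bins get an initial segment $s_{(1)},\dots,s_{(M)}$, and bin $i^*$ gets the remaining $d-M$ largest items. I would take $M$ to be the least integer satisfying both
\[
 M\ge d-c_{i^*}
 \qquad\text{and}\qquad
 \sum_{j\le M}s_{(j)}\ge n-1-(1-\kappa)n .
\]
The first condition gives the count bound $|I_{i^*}|\le c_{i^*}$, and the second gives the load bound for $I_{i^*}$.

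To check that $M$ is admissible, note that an increasing initial segment has at least average load, so $\sum_{j\le M}s_{(j)}\ge M(n-1)/d$. Hence the load condition holds once $M\ge\kappa d\,n/(n-1)+1$, which is at most $\kappa n+2$. Together with the bound on $d-c_{i^*}$ above, this gives $M\le C'$ for large $n$, so the segment fits in the total count capacity of the other bins. Minimality of $M$ gives one of two upper bounds on the load $\Lambda$ sent to the nondistinguished bins. If the load condition was the binding one, the last item is at most $n/2$, so $\Lambda\le\kappa n+n/2$. Otherwise $M=d-c_{i^*}$, and $\Lambda$ is at most the $M$ smallest items, so $\Lambda\le(n-1)M/d$.

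The step I expect to be the main obstacle is distributing these $M$ smallest items among the bins $i\ne i^*$ so that each bin respects both its count $c_i$ and the load cap $(1-\kappa)n$. I would do this by filling those bins in turn with consecutive blocks of the increasing list, giving bin $i$ exactly $\min\{c_i,\text{remaining}\}$ items. Counts are then automatic. For loads, every block consists of items no larger than the remaining larger items, so a bin receiving $c_i$ items carries at most a $c_i/d$ fraction of the total, roughly. In the case $M=d-c_{i^*}$, any single bin $i\ne i^*$ therefore has load at most $\frac{c_i}{c_i+c_{i^*}}(n-1)\le\frac12 n$ when $c_{i^*}\ge c_i$, since items below the segment are no larger than those assigned to $i^*$. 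In the other case the total is at most $(\kappa+1/2)n\le(1-\kappa)n$. A final check should confirm that the load given to $i^*$ is at most $(1-\kappa)n$ in both cases. If the averaging in the distribution step turns out too lossy, I would replace it by a direct exchange argument: move the largest item of an overloaded nondistinguished bin to $i^*$ and swap in a smaller one, which uses $s_j\le n/2$ and $\kappa\le1/8$.
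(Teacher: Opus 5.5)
Your argument is correct in substance and takes a genuinely different, constructive route, but one step is stated backwards. In the admissibility check you say an increasing initial segment has at least average load. In fact the $M$ smallest items have \emph{at most} average size, so $\sum_{j\le M}s_{(j)}\ge M(n-1)/d$ is false; take many items of size $1$ and one of size about $n/2$. The repair is the trivial bound $s_j\ge1$. If $M_2\ge1$ is the least index satisfying the load condition, then $M_2-1\le\sum_{j<M_2}s_{(j)}<\kappa n-1$, so $M_2<\kappa n\le\omega n/8<\tfrac34\omega n\le C'$. Combined with $d-c_{i^*}\le C'-\omega n-1<C'$, this gives $M\le C'$ for every $n$, so your proviso ``for large $n$'' disappears; the lemma has no largeness hypothesis.

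Similarly, the heuristic that a block of $c_i$ items carries at most a $c_i/d$ fraction of the total is false for later blocks, but you never need it. When $M=d-c_{i^*}\ge1$, bin $i^*$ receives exactly $c_{i^*}$ items, each at least as large as every item of the segment. That comparison alone gives each block load at most $\frac{c_i}{c_i+c_{i^*}}(n-1)\le\frac{n-1}2$, using $c_i\le c_{i^*}$. The load of $i^*$ is at most $(1-\kappa)n$ directly from the second condition defining $M$. So the case analysis closes, and the exchange fallback is unnecessary.

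The paper's proof is extremal rather than constructive. Among all count-feasible assignments (one exists since $d\le n-1<\sum_ic_i$), it takes one minimizing the maximum load. Suppose some load exceeds $(1-\kappa)n$. Then that bin $h$ is unique and fewer than $\kappa n$ items lie outside it. Since $\sum_{i\ne h}c_i\ge\frac34\omega n>\kappa n$, some other bin has a free slot. Moving any item of $h$ there leaves the receiving bin below $\kappa n+n/2<(1-\kappa)n$ and strictly lowers the maximum, a contradiction. This avoids sorting, the distinguished bin, and your two-case split according to which constraint determines $M$. Your version instead yields an explicit assignment: the largest items go to the largest-capacity bin and consecutive blocks of the smallest items go elsewhere. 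It uses the same constants $\delta_0=\omega/4$ and $\kappa\approx\min\{\omega/8,1/8\}$. Like the paper's proof, it uses $q$ only through $\sum_ic_i$ and $\max_ic_i$, which is what keeps $\kappa$ independent of $q$.
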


\begin{proof}
Set $\delta_0=\omega/4$ and choose
$0<\kappa<\min\{\omega/8,1/8\}$.  Since
$d\le n-1<\sum_i c_i$, some assignment satisfies the count constraints.
Among all such assignments, choose one minimizing the maximum load
$L_i=\sum_{j\in I_i}s_j$.

Suppose $L_h>(1-\kappa)n$.  This heavy bin is unique, since all other loads
together are less than $\kappa n$.  Hence fewer than $\kappa n$ components
are assigned outside $h$.  Moreover,
\[
 \sum_{i\ne h}c_i
 \ge(1+\omega)n-(1+\delta_0)n
 =\frac{3\omega n}{4}.
\]
Since $\kappa<\omega/8$, some bin $i\ne h$ has an unused count slot.  Move
any component from $h$ to $i$.  The new load of $i$ is less than
$\kappa n+n/2<(1-\kappa)n$, while the old unique maximum strictly
decreases, a contradiction.
\end{proof}

\begin{lemma}[Profile lemma]\label{lem:profile}
Fix $q\ge2$ and $\omega>0$.  There are $\gamma>0$ and $n_0$ such that the
following holds for every $n\ge n_0$.  Let $T$ be an $n$-vertex tree, and
suppose a blue graph contains pairwise disjoint sets $W_1,\ldots,W_q$ and a
vertex $x\notin\bigcup_iW_i$ satisfying
\[
 (1-\gamma)n\le |W_i|\le(1+\gamma)n,
 \qquad
 \delta(\Gb[W_i])\ge |W_i|-\gamma n
\]
for every $i\in[q]$.  If
\[
 \sum_{i=1}^q d_{\Gb}(x,W_i)\ge(1+\omega)n
\]
then $\Gb$ contains $T$.
\end{lemma}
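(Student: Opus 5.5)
Root $T$ at a vertex $v$ and map $v$ to $x$; the components of $T-v$ are then grafted onto blue neighbours of $x$ inside the reservoirs $W_i$. If $v$ has degree $d$, the components $K_1,\dots,K_d$ of $T-v$ have orders $s_1,\dots,s_d$ with $\sum_j s_j=n-1$. Choose $v$ to be a centroid of $T$, so that $s_j\le n/2$ for every $j$. Put
\[
 c_i=\min\{d_{\Gb}(x,W_i),\lfloor(1+\delta_0)n\rfloor\}.
\]
We need $\sum_i c_i\ge(1+\omega')n$ for some $\omega'>0$ depending only on $\omega$. If no truncation occurs, this is the hypothesis with $\omega'=\omega$. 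If some $c_i$ is truncated, then already $c_i\ge(1+\delta_0)n-1$. To keep things clean, apply \Cref{lem:allocation} with $\omega'=\min\{\omega,\delta_0/2\}$ and its own $\delta_0(\omega')=\omega'/4$. Truncating at that smaller level gives, in the truncated case, $c_i\ge(1+\omega'/4)n-1$. This is not quite $(1+\omega')n$, so instead take the truncation level equal to $(1+\delta_0(\omega'))n$ with $\omega'=\omega$ and accept that in the truncated case a single bin gives $\sum c_i\ge (1+\omega/4)n-1$. Then rerun with $\omega''=\omega/8$: since $\delta_0(\omega'')=\omega/32<\omega/4$, the sum condition holds for $\omega''$. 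I will simply fix $\omega''=\omega/8$, truncate at $(1+\omega/32)n$, and check both cases. The lemma then yields $\kappa>0$ and a partition $[d]=I_1\dot\cup\cdots\dot\cup I_q$ with $|I_i|\le c_i\le d_{\Gb}(x,W_i)$ and $\sum_{j\in I_i}s_j\le(1-\kappa)n$.

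Now embed. For each $i$, choose distinct blue neighbours of $x$ in $W_i$, one for each $j\in I_i$; the count bound makes this possible. Map the neighbour of $v$ lying in $K_j$ to the chosen vertex. Then $\mathcal F_i=\bigcup_{j\in I_i}K_j$ is a forest with $t_i\le(1-\kappa)n$ vertices and prescribed root images in $W_i$. Apply \Cref{lem:rootedforest} to $J=\Gb[W_i]$. This requires
\[
 \delta(\Gb[W_i])\ge|W_i|-\gamma n\ge(1-2\gamma)n\ge(1-\kappa)n-1\ge t_i-1,
\]
which holds once $\gamma\le\kappa/2$. The reservoirs are disjoint and $x$ lies outside all of them, so the union of these embeddings together with $v\mapsto x$ is an embedding of $T$ in $\Gb$. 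We therefore choose $\gamma=\kappa(\omega/8)/2$; the value of $n_0$ is needed only to absorb rounding.

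The main obstacle is the cap $c_i\le(1+\delta_0)n$ in \Cref{lem:allocation}. The choice of constants in the first paragraph is exactly the bookkeeping that ensures the truncated counts still sum to more than $n$. If that bookkeeping becomes awkward, there is a simpler fallback: when some reservoir has $d_{\Gb}(x,W_i)$ much larger than $n$, place every component in that single $W_i$ and handle that case directly. There the load can be nearly $n-1$, and the min-degree condition $\ge(1-2\gamma)n$ does not suffice on its own. The extra room has to come from the fact that the roots have many choices among the neighbours of $x$, with the components embedded greedily one at a time. This is why the truncation route through \Cref{lem:allocation} is preferred, since it forces a nontrivial split across at least two reservoirs.
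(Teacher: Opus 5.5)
Your overall route is the paper's: a centroid $z\mapsto x$, \Cref{lem:allocation} to distribute the components of $T-z$ among the reservoirs, distinct blue neighbours of $x$ as root images, and \Cref{lem:rootedforest} in each $\Gb[W_i]$ using $\gamma\le\kappa/2$. The gap is in the step you yourself identify as the crux, namely the cap $c_i\le(1+\delta_0)n$.

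Your truncation bookkeeping does not close. Suppose you truncate at $(1+\omega/32)n$ and invoke \Cref{lem:allocation} with $\omega''=\omega/8$. In the truncated case you only know that one bin contributes about $(1+\omega/32)n$, which is below the required $(1+\omega/8)n$. So the claim that ``the sum condition holds for $\omega''$'' is false. If you truncate at $(1+\omega/4)n$ instead, the cap hypothesis for $\omega''$ fails.

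No tuning of constants can fix this. The allocation lemma must have $\delta_0<\omega$: otherwise one bin of count about $(1+\omega)n$, with the others essentially empty, would meet its hypotheses while forcing nearly all $n-1$ vertices into that bin. Hence a single truncated bin can never certify the sum condition by itself. Your fallback scenario, in which one reservoir absorbs everything, is likewise not handled by minimum degree, as you note.

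The missing observation is that this scenario never arises, because $d_{\Gb}(x,W_i)\le|W_i|\le(1+\gamma)n$. Take $\kappa,\delta_0$ from \Cref{lem:allocation} for $\omega$ itself and choose $0<\gamma<\min\{\delta_0,\kappa/2\}$. Then $c_i=d_{\Gb}(x,W_i)$ satisfies the cap automatically, the sum hypothesis is the given one verbatim, and no truncation or case split is needed. This is exactly the paper's proof.

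With the specific constants in the paper's proof of \Cref{lem:allocation}, your final $\gamma=\kappa(\omega/8)/2$ is below $\omega/128<\omega/32$, so your truncation is in fact never triggered. However, you neither observed nor used this. The statement of \Cref{lem:allocation} alone does not give $\kappa\le2\delta_0$, and the estimate you wrote for the truncated case is wrong. You should impose $\gamma\le\delta_0$ explicitly and drop the truncation.
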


\begin{proof}
Let $\kappa,\delta_0$ be supplied by \Cref{lem:allocation} for $\omega$, and
choose
\[
 0<\gamma<\min\{\delta_0,\kappa/2\}.
\]
It is enough to take $n_0=2$.  Choose a centroid $z$ of $T$.  The components
of $T-z$ have orders
$s_1,\ldots,s_d$ with $s_j\le n/2$ and $\sum_j s_j=n-1$.  Put
$c_i=d_{\Gb}(x,W_i)$.  Then $c_i\le |W_i|\le(1+\delta_0)n$, so
\Cref{lem:allocation}
assigns the components to the reservoirs so that at most $c_i$ components
and at most $(1-\kappa)n$ vertices are assigned to $W_i$.

Map $z$ to $x$.  For each component assigned to $W_i$, map its neighbor of
$z$ to a distinct blue neighbor of $x$ in $W_i$.  Put
\[
 t_i=\sum_{j\in I_i}s_j .
\]
Then $t_i\le(1-\kappa)n$, and
\[
 \delta(\Gb[W_i])\ge |W_i|-\gamma n
 \ge(1-2\gamma)n\ge(1-\kappa)n\ge t_i-1.
\]
\Cref{lem:rootedforest} embeds all assigned rooted forests, giving a blue
copy of $T$.
\end{proof}

\section{Null-blocker compactness}\label{sec:compactness}

We now prove the abstract rounding theorem used in the Ramsey argument.  The
intuition is as follows.  For each vertex $x$ and coordinate $i$, an event
$A_i(x)$ records the certificate that $x$ is compatible with color $i$.  A
random point in each probability space usually makes $x$ miss exactly one
coordinate; we then assign $x$ to that missing coordinate.  The $a$-set
null-intersection condition prevents many vertices from lying in all sampled
coordinates, while the signed expectation balances those vertices against the
ones missing two or more coordinates.  This is why the natural deletion bound is
$a-1$: the exceptional all-compatible set cannot contain an $a$-set.

\subsection{Exact countable rounding}

\begin{theorem}[Exact null-blocker rounding]\label{thm:exactrounding}
Fix integers $q\ge2$ and $a\ge1$.  Let $X$ be finite or countable.  For each
$i\in[q]$, let $(\Omega_i,\mu_i)$ be a probability space, let
$A_i(x)\subseteq\Omega_i$ be measurable for every $x\in X$, and let
$\cD_i$ be a hypergraph on $X$ whose edges are nonempty and finite.  Write
\[
 \rho_i(x)=\mu_i(A_i(x)).
\]
Assume the following.

\begin{enumerate}[label=\textup{(N\arabic*)},leftmargin=3em]
\item\label{it:N1}
For every $x\in X$,
\[
 \sum_{i=1}^q\rho_i(x)\ge q-1.
\]

\item\label{it:N2}
For every $S\in\binom Xa$, some $i\in[q]$ satisfies
\[
 \mu_i\left(\bigcap_{x\in S}A_i(x)\right)=0.
\]

\item\label{it:N3}
For every $i\in[q]$ and every $E\in\cD_i$, some $j\ne i$ satisfies
\[
 \mu_j\left(\bigcap_{x\in E}A_j(x)\right)=0.
\]
\end{enumerate}

Then there are $Z\subseteq X$, with $|Z|\le a-1$, and a partition
\[
 X\setminus Z=X_1\mathbin{\dot\cup}\cdots\mathbin{\dot\cup}X_q
\]
such that $X_i$ is independent in $\cD_i$ for every $i$.

Moreover, if
\[
 h(x)\in\operatorname*{arg\,min}_{i\in[q]}\rho_i(x),
\]
then the partition may be chosen so that $x\in X_{h(x)}$ for all but
finitely many $x\in X$.
\end{theorem}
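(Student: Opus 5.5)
The plan is to round a single random sample. Draw $\omega=(\omega_1,\ldots,\omega_q)$ from the product measure $\mu_1\times\cdots\times\mu_q$. For $x\in X$, let
\[
 M_\omega(x)=\{i\in[q]:\omega_i\notin A_i(x)\}
\]
be the set of coordinates that $x$ misses. Sort the vertices into three types:
\begin{itemize}
\item $|M_\omega(x)|=1$: put $x$ into the unique missed coordinate;
\item $M_\omega(x)=\varnothing$ (\emph{all-compatible}): put $x$ into an arbitrary coordinate, or into $h(x)$;
\item $|M_\omega(x)|\ge2$ (\emph{multi-missing}): put $x$ into $Z$.
\end{itemize}
There are countably many $a$-sets and countably many finite edges, since $X$ is countable and edges are finite. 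So, discarding a null set of $\omega$, we may assume two things. First, by \ref{it:N2}, for every $S\in\binom Xa$ some $\omega_i$ avoids $\bigcap_{x\in S}A_i(x)$. Second, by \ref{it:N3}, for every $i$ and every $E\in\cD_i$ some $j\ne i$ has $\omega_j\notin\bigcap_{x\in E}A_j(x)$.

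The first property says that no $a$-set consists of all-compatible vertices, so there are at most $a-1$ of them. For independence, suppose $E\in\cD_i$ lies inside $X_i$. Every vertex of $E$ is either all-compatible or misses exactly coordinate $i$, so $\omega_j\in A_j(x)$ for all $x\in E$ and all $j\ne i$. This contradicts the second property. Hence $X_i$ is independent in $\cD_i$ for every such $\omega$, and the only remaining task is to bound $|Z|$, the number of multi-missing vertices.

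That bound comes from the signed count. For each $x$, hypothesis \ref{it:N1} gives
\[
 \mathbb E\bigl[|M_\omega(x)|-1\bigr]=\sum_i(1-\rho_i(x))-1\le0 .
\]
Pointwise, $|M_\omega(x)|-1$ equals $-1$ on all-compatible vertices and is at least $1$ on multi-missing ones. So whenever
\[
 \Phi(\omega)=\sum_x\bigl(|M_\omega(x)|-1\bigr)\le0,
\]
the number of multi-missing vertices is at most the number of all-compatible vertices, hence at most $a-1$.

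For finite $X$, $\mathbb E\Phi\le0$, so $\{\Phi\le0\}$ has positive probability. It therefore meets the full-measure event above, and we can pick a good $\omega$ there. This proves the finite case. In fact we get the sharper conclusion: all-compatible vertices may be placed freely, in particular into $h(x)$.

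The main obstacle is countable $X$, where $\Phi$ need not converge, together with the ``moreover'' clause. I would first try a compactness argument.
\begin{itemize}
\item Enumerate $X=\{x_1,x_2,\ldots\}$.
\item Apply the finite case to each truncation $X^{(n)}=\{x_1,\ldots,x_n\}$, using the restricted hypergraphs and the same events.
\item Encode each solution as a function $X\to[q]\cup\{\ast\}$, with $\ast$ marking $Z$ and vertices outside $X^{(n)}$ sent to $\ast$, and pass to a pointwise convergent subsequence by a diagonal (König/Tychonoff) argument.
\end{itemize}
The limit has the required form. Independence of $X_i$ in $\cD_i$ involves only finite edges, and $|Z|\le a-1$ is a condition on finitely many vertices at a time, so both survive pointwise limits.

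For the ``moreover'' clause I would strengthen the finite statement before taking limits: the truncated solutions should agree with $h$ on all but a bounded number of vertices, with the bound uniform in $n$. My first attempt is to tilt the sampling. Condition on the coordinate $h(x)$ being the one that is missed, using that $\rho_{h(x)}(x)$ is minimal and therefore $\rho_{h(x)}(x)\le 1-1/q$. Then bound the expected number of vertices that miss a coordinate other than $h(x)$, again through the signed expectation. If this uniform bound cannot be obtained, I would fall back on proving the cofinite statement only for vertices outside a finite ``core'' determined by \ref{it:N2} and \ref{it:N3}. This is the step I expect to be genuinely delicate.
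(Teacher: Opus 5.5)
Your finite-case argument is correct and is the paper's mechanism: one product sample, assignment by the missed coordinate, almost-sure blocker patterns from \ref{it:N2}--\ref{it:N3}, and the signed count $\Phi$. Your compactness passage to countable $X$ also correctly produces $Z$ and the partition. The gap is the ``moreover'' clause, which you do not prove.
\begin{itemize}
\item The tilting idea is not a well-defined procedure, because one sample is shared by all $x$ and their homes differ.
\item The bound $\rho_{h(x)}(x)\le1-1/q$ is far too weak to give anything.
\item The fallback merely restates the claim.
\item Cofinite agreement with $h$ does not pass to pointwise limits of truncated solutions without uniform control, for instance an $n$-independent bound on the number of disagreements with $h$. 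You do not establish any such control.
\end{itemize}
This clause is not cosmetic: \Cref{lem:finitetransfer} needs exactly this, coloring every label above a fixed $L$ by its actual home.

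The missing idea is the summability of $e(x)=\min_i\rho_i(x)$, and you already have half of it. You showed that almost surely at most $a-1$ vertices are all-compatible. Taking expectations (Tonelli) gives $\sum_x\prod_i\rho_i(x)\le a-1$. By \ref{it:N1}, $\sum_{j\ne h(x)}(1-\rho_j(x))\le e(x)$, so $\rho_j(x)\ge1-e(x)$ for $j\ne h(x)$. Hence $\prod_i\rho_i(x)\ge2^{-(q-1)}e(x)$ when $e(x)\le1/2$. Only finitely many $x$ have $e(x)>1/2$, since each contributes more than $2^{-q}$ to the product sum. Therefore $\sum_xe(x)<\infty$. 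The union bound then gives $\Pr\bigl(M_\omega(x)\ne\{h(x)\}\bigr)\le e(x)+\sum_{j\ne h(x)}(1-\rho_j(x))\le2e(x)$. By Borel--Cantelli, almost surely all but finitely many vertices miss exactly their home and are placed there, which is the ``moreover'' clause. The same estimate also removes the convergence obstacle you identified. We have $\bigl||M_\omega(x)|-1\bigr|\le(q-1)\ind_{\{M_\omega(x)\ne\{h(x)\}\}}$, so $\sum_x\mathbb E\bigl||M_\omega(x)|-1\bigr|<\infty$. Thus $\Phi$ converges absolutely almost surely and in $L^1$, is integer-valued, and satisfies $\mathbb E\Phi\le0$. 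Your finite argument then runs verbatim on countable $X$, picking $\omega$ in $\{\Phi\le0\}$ intersected with the probability-one events. No compactness step is needed; this is how the paper argues.
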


\begin{proof}
Put
\[
 e(x)=\rho_{h(x)}(x)=\min_i\rho_i(x).
\]
On the product probability space
\[
 (\Omega,\mu)=\prod_{i=1}^q(\Omega_i,\mu_i),
\]
define
\[
 Q_x=A_1(x)\times\cdots\times A_q(x).
\]
If $|X|<a$, every point of $\Omega$ belongs to at most $a-1$ of the sets
$Q_x$.  Otherwise, \ref{it:N2} implies that every intersection of $a$
distinct sets $Q_x$ has measure zero.  The family of $a$-subsets of a
countable set is countable, so outside one null set every point belongs to
at most $a-1$ of the sets $Q_x$.  Tonelli's theorem yields
\begin{equation}\label{eq:productsum}
 \sum_{x\in X}\prod_{i=1}^q\rho_i(x)
 =\int_\Omega\sum_{x\in X}\ind_{Q_x}\,d\mu
 \le a-1.
\end{equation}

Condition \ref{it:N1} gives
\[
 \sum_{j\ne h(x)}(1-\rho_j(x))
 =(q-1)-\sum_{j\ne h(x)}\rho_j(x)
 \le e(x).
\]
Hence $\rho_j(x)\ge1-e(x)$ for $j\ne h(x)$.  If $e(x)\le1/2$, then
\[
 \prod_i\rho_i(x)
 \ge e(x)(1-e(x))^{q-1}
 \ge2^{-(q-1)}e(x).
\]
If $e(x)>1/2$, then every coordinate exceeds $1/2$, so the product exceeds
$2^{-q}$.  It follows from \Cref{eq:productsum} that only finitely many
vertices have $e(x)>1/2$ and that
\begin{equation}\label{eq:esummable}
 \sum_{x\in X}e(x)<\infty.
\end{equation}

Choose the coordinates $\omega_i\sim\mu_i$ independently and put
\[
 P_i=\{x\in X:\omega_i\in A_i(x)\},
 \qquad
 t(x)=|\{i:x\in P_i\}|.
\]
Call $x$ \emph{canonical} if
\[
 x\notin P_{h(x)}
 \quad\text{and}\quad
 x\in P_j\quad\text{for every }j\ne h(x).
\]
By the union bound,
\[
 \Pr(x\text{ is noncanonical})
 \le e(x)+\sum_{j\ne h(x)}(1-\rho_j(x))
 \le2e(x).
\]
The first Borel--Cantelli lemma, which does not require independence between
the events indexed by $x$, and \Cref{eq:esummable} imply that almost surely
only finitely many vertices are noncanonical.

Let
\[
 Y=\{x:t(x)=q\}.
\]
If $|X|<a$, then $|Y|\le|X|\le a-1$ trivially.  Otherwise, for every fixed
$S\in\binom Xa$, independence between the $q$ sampled coordinates and
\ref{it:N2} give
\[
 \Pr(S\subseteq Y)
 =\prod_{i=1}^q
 \mu_i\left(\bigcap_{x\in S}A_i(x)\right)=0.
\]
Taking a countable union over $S$, almost surely
\begin{equation}\label{eq:Ybound}
 |Y|\le a-1.
\end{equation}

For every edge $E\in\cD_i$, choose a blocker $j(E)\ne i$ supplied by
\ref{it:N3}.  The collection of finite subsets of a countable set is
countable, so the union of all edge sets of the $\cD_i$ is countable.
Consequently, almost surely
\begin{equation}\label{eq:blockpattern}
 E\nsubseteq P_{j(E)}
 \qquad\text{for every }i\text{ and }E\in\cD_i.
\end{equation}

Let
\[
 U=\{x:t(x)\le q-2\}.
\]
Both $U$ and $Y$ are almost surely finite because they consist of
noncanonical vertices.  Define
\[
 g_x=\ind_{\{t(x)\le q-2\}}-\ind_{\{t(x)=q\}}.
\]
Pointwise, $g_x\le q-1-t(x)$, and therefore
\[
 \mathbb E g_x
 \le q-1-\sum_i\rho_i(x)
 \le0.
\]
Moreover,
\[
 |g_x|\le\ind_{\{x\text{ is noncanonical}\}},
\]
so \Cref{eq:esummable} gives
\[
 \sum_x\mathbb E|g_x|<\infty.
\]
Thus the random series
\[
 G:=\sum_xg_x
\]
converges absolutely almost surely and in $L^1$.  Define $G=0$ on the
exceptional null set where the series does not converge.  Then $G$ is
integer-valued everywhere, and almost surely
\[
 G=|U|-|Y|.
\]
Moreover, $L^1$ convergence gives
\[
 \mathbb EG=\sum_x\mathbb Eg_x\le0.
\]
Since $G$ is integer-valued, $\Pr(G\le0)>0$; otherwise $G\ge1$ almost
surely.  The intersection of this positive-probability event with all the
preceding probability-one events still has positive probability.  Fix an
outcome in that intersection.  Then
\[
 |U|\le|Y|\le a-1.
\]

Delete $Z=U$.  Every remaining vertex has $t(x)=q-1$ or $t(x)=q$.  If
$t(x)=q-1$, assign $x$ to its unique missing coordinate; if $t(x)=q$,
assign it arbitrarily.  Let $X_i$ be the set assigned to coordinate $i$.
Every vertex of $X_i$ lies in $P_j$ for every $j\ne i$.  Thus an edge
$E\in\cD_i$ contained in $X_i$ would satisfy
$E\subseteq P_{j(E)}$, contradicting \Cref{eq:blockpattern}.  Hence $X_i$
is independent in $\cD_i$.

Every canonical vertex has unique missing coordinate $h(x)$.  Since only
finitely many vertices are noncanonical, all but finitely many vertices are
assigned to their homes.
\end{proof}

\subsection{Finite approximate systems}

\begin{theorem}[Null-blocker compactness]\label{thm:compactness}
Fix integers $q\ge2$, $a\ge1$, and $r_*\ge1$.  There exists
$\varepsilon_0=\varepsilon_0(q,a,r_*)>0$ such that the following holds.
Let $X$ be finite.  For each $i\in[q]$, let $(\Omega_i,\mu_i)$ be a finite
probability space, let $A_i(x)\subseteq\Omega_i$ be a measurable event for
$x\in X$, and let
$\cC_i$ be a hypergraph on $X$ whose edges are nonempty and whose rank is at
most $r_*$.  Write $\rho_i(x)=\mu_i(A_i(x))$.  Suppose that
$0\le\varepsilon\le\varepsilon_0$ and that
\begin{enumerate}[label=\textup{(A\arabic*)},leftmargin=3em]
\item\label{it:A1}
for every $x\in X$,
\[
 \sum_{i=1}^q\rho_i(x)\ge q-1-\varepsilon;
\]
\item\label{it:A2}
for every $S\in\binom Xa$,
\[
 \min_{i\in[q]}\mu_i\left(\bigcap_{x\in S}A_i(x)\right)\le\varepsilon;
\]
\item\label{it:A3}
for every $i\in[q]$ and every $E\in\cC_i$,
\[
 \min_{j\ne i}\mu_j\left(\bigcap_{x\in E}A_j(x)\right)\le\varepsilon.
\]
\end{enumerate}
Then there are $Z\subseteq X$, $|Z|\le a-1$, and a partition
\[
 X\setminus Z=X_1\mathbin{\dot\cup}\cdots\mathbin{\dot\cup}X_q
\]
such that $X_i$ is independent in $\cC_i$ for every $i$.
\end{theorem}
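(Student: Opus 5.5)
We argue by contradiction and compactness, reducing to \Cref{thm:exactrounding}. Suppose no $\varepsilon_0>0$ works. Then for each $m\ge1$ there is a finite counterexample system with parameter $\varepsilon_m\le 1/m$: a finite set $X^{(m)}$, finite probability spaces $(\Omega^{(m)}_i,\mu^{(m)}_i)$, events $A^{(m)}_i(x)$, and hypergraphs $\cC^{(m)}_i$ of rank at most $r_*$. These satisfy \ref{it:A1}--\ref{it:A3} with $\varepsilon_m$, but admit no deletion set of size at most $a-1$ followed by a cooperative partition. The aim is to pass to a countable limit system satisfying \ref{it:N1}--\ref{it:N3} exactly. \Cref{thm:exactrounding} then yields a good partition of the limit, and we transfer it back to obtain a contradiction for large $m$.

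\textbf{Step 1: reduce each system to a finite "type" structure.} Only the joint laws of the indicator vectors matter. For each $i$, the relevant data are the values $\mu_i(\bigcap_{x\in S}A_i(x))$ over finite $S$, i.e.\ the law of the random subset $P_i=\{x:\omega_i\in A_i(x)\}$ of $X$. Working instead with $\Omega_i=\{0,1\}^X$ carrying the pushforward measure loses nothing. The obstacle to a naive limit is that $|X^{(m)}|$ is unbounded, so there is no fixed ground set.

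\textbf{Step 2: localize to bounded pieces and extract a limit.} For a minimal counterexample, failure should be witnessed on a bounded portion. The plan is to show that if every induced subsystem on at most $M$ vertices (with $M=M(q,a,r_*)$) has a good partition, then the whole system does. This would come from the structure produced by \Cref{thm:exactrounding}: almost every vertex goes to its home $h(x)$, and the exceptional vertices (those with $e(x)$ not tiny, together with the edges meeting them) form a bounded set.

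Since a bounded-$M$ argument may be hard to make uniform, I would try first the cleaner route of enumerating $X^{(m)}\subseteq\mathbb N$ and taking a diagonal subsequence. Along it, every finite-dimensional quantity converges: the values $\mu^{(m)}_i(\bigcap_{x\in S}A_i(x))$ for each finite $S$, and membership of each finite set in $\cC^{(m)}_i$. By Kolmogorov extension on $\{0,1\}^{\mathbb N}$ (weak limits of the laws of $P_i^{(m)}$), this gives a limit system on $X=\mathbb N$, or on a finite set. Because all conditions involve finite sets and the inequalities become exact as $\varepsilon_m\to0$, the limit satisfies \ref{it:N1}--\ref{it:N3}. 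Apply \Cref{thm:exactrounding} to get $Z$ and a partition $(X_i)$ in which all but finitely many $x$ lie in $X_{h(x)}$.

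\textbf{Step 3: transfer back.} This is the main obstacle. The limit partition is good only for limit edges, and weak convergence says nothing uniform about far-out vertices of $X^{(m)}$. The remedy is to exploit the summability $\sum_x e(x)<\infty$ from \eqref{eq:esummable}, which in finite form follows from \eqref{eq:productsum} up to an $O(\varepsilon)$ error times $|X|$. The form I would try is a quantitative version: choose $\varepsilon_0$ so small that every vertex with $e(x)\ge$ some $\beta$ lies in a set of size bounded in terms of $q,a$.

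Then run the random-rounding proof of \Cref{thm:exactrounding} directly on the finite system, so that compactness is needed only on this bounded "core". Concretely, sample $\omega_i$ and define $P_i$, $t(x)$, $U$, $Y$, and $G=|U|-|Y|$ as there. The edge-blocking event \eqref{eq:blockpattern} and the bound $|Y|\le a-1$ now hold only approximately. Rank at most $r_*$ bounds the number of relevant edges meeting the core, so a union bound over them costs $O(\varepsilon)$ per pattern. For the non-core vertices, which are canonical with high probability, I would assign $x$ to $h(x)$. The remaining error events are handled by the limit structure on the bounded core, where the compactness argument of Step 2 applies because the core has size bounded independently of $m$. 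I expect making this core/periphery split rigorous, in particular controlling edges of $\cC_i$ that cross between core and periphery, to be the hardest step.
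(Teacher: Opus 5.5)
\textbf{Verdict: genuine gap.} Steps 1--2 match the paper: the pushforward to $\{0,1\}^{\N}$, diagonal extraction, Kolmogorov extension, a limit satisfying \ref{it:N1}--\ref{it:N2}, and the separate bounded-$|X|$ case. Step 3, the transfer back, is missing, and the route you sketch does not close.

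\emph{Finite random rounding fails.} In a finite system, \ref{it:A2} is vacuous on vertices sharing a home $i$ with $\rho_i(x)\le\varepsilon$. So there can be unboundedly many vertices of impurity about $\varepsilon$. The signed expectation then only gives $\mathbb E(|U|-|Y|)\le\varepsilon|X|$. Union bounds over $a$-sets or over edges of $\cC_i$ cost $\varepsilon$ times an unbounded count. Rank at most $r_*$ bounds the order of an edge, not the number of edges through a core vertex.

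\emph{A fixed-threshold core/periphery split is circular.} Strip the periphery vertices of home $i$ from an edge of $\cC_i$. Then \ref{it:A1} only shows the core trace is blocked up to about $\varepsilon+r_*(\beta+\varepsilon)$. So in a limit over $m$ with $\beta$ fixed, \ref{it:N3} need not hold and \Cref{thm:exactrounding} is unavailable. Letting $\beta\to0$ instead destroys the core-size bound $M(\beta)$.

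\emph{Arbitrary enumeration loses impure vertices.} Impure vertices may escape to infinity and be invisible in the limit. Then ``all but finitely many vertices go home'' says nothing about them.

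\textbf{The two missing ideas (the paper's fix).} First, label each $X_m$ by nonincreasing impurity $e_m(x)=\min_i\rho^{(m)}_i(x)$. Summability of the limiting impurities then forces $e_m(x_{m,\ell_m})\to0$ whenever $\ell_m\to\infty$. By \ref{it:A1}, a vertex whose label diverges and whose actual home is $i$ then has $\rho^{(m)}_j\to1$ for every $j\ne i$.

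Second, replace the limit of eventually-present edges by \emph{shadow} hypergraphs. Put $E\in\cD_i$ if there are edges $F_k\in\cC^{(m_k)}_i$ such that:
\begin{itemize}
\item each sorted label position is eventually constant or divergent;
\item the eventual constants form $E$;
\item every divergent position has actual home $i$.
\end{itemize}
Bounded rank plus the escaping-vertex estimate give, for a fixed blocker $j\ne i$, $\mu_j\bigl(\bigcap_{\ell\in E}A_j(\ell)\bigr)\le\varepsilon_{m_k}+o(1)\to0$. This is exactly \ref{it:N3}, and it also forces $E\ne\varnothing$.

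\Cref{thm:exactrounding} then yields $Z$ and $\phi$ with $\phi=h$ beyond some $L$. In the finite systems, color labels $\le L$ by $\phi$ and labels $>L$ by their actual homes. If some color-$i$ edge were monochromatic along a subsequence, extracting positions would produce a $\phi$-monochromatic edge of $\cD_i$, a contradiction. This handles the core--periphery crossing edges deterministically, with no finite random rounding.
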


We prove the finite theorem by contradiction.  If no such $\varepsilon_0$
exists, then for every $\nu$ there is a counterexample with
$\varepsilon_\nu\le1/\nu$.  In such a sequence, an \emph{actual home} of
$x\in X_\nu$ is a coordinate
\[
 h_\nu(x)\in\operatorname*{arg\,min}_{i\in[q]}\rho_{i,\nu}(x),
\]
and we put $e_\nu(x)=\min_i\rho_{i,\nu}(x)$.

\begin{lemma}[Finite-dimensional limit]\label{lem:fdlimit}
Let $(X_\nu,A_{i,\nu},\cC_{i,\nu})$ be counterexamples with
$\varepsilon_\nu\to0$ and $|X_\nu|\to\infty$.  Label
\[
 X_\nu=\{x_{\nu,1},\ldots,x_{\nu,t_\nu}\}
\]
so that
\[
 e_\nu(x_{\nu,1})\ge e_\nu(x_{\nu,2})\ge\cdots.
\]
After passing to a subsequence, there are probability spaces
$(\Omega_i,\mu_i)$, measurable events $A_i(\ell)\subseteq\Omega_i$, and homes
$h(\ell)\in[q]$ for $\ell\in\N$ such that, for every fixed $\ell$,
\[
 h_\nu(x_{\nu,\ell})=h(\ell)
\]
eventually and
\[
 \rho_{i,\nu}(x_{\nu,\ell})\longrightarrow
 \rho_i(\ell):=\mu_i(A_i(\ell))
 \quad(i\in[q]).
\]
The limiting system satisfies \ref{it:N1} and \ref{it:N2}, and
$h(\ell)$ is a minimizing coordinate for $\rho_i(\ell)$.
\end{lemma}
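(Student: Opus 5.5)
We aim to build the limit object as a weak limit of finite-dimensional distributions, by a diagonal argument plus Kolmogorov extension. For each $\nu$ and each $i\in[q]$, the finite space $(\Omega_{i,\nu},\mu_{i,\nu})$ together with the events $A_{i,\nu}(x_{\nu,\ell})$ for $\ell\le t_\nu$ defines a random binary sequence
\[
 \xi_{i,\nu}=(\ind_{A_{i,\nu}(x_{\nu,\ell})})_{\ell\ge1}\in\{0,1\}^{\N},
\]
padded with zeros for $\ell>t_\nu$. Its law $\lambda_{i,\nu}$ is a Borel probability measure on the compact metrizable space $\{0,1\}^{\N}$. Because $|X_\nu|\to\infty$, every fixed index $\ell$ is eventually a genuine vertex. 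We pass to a subsequence along which three things hold. First, for each $i$, $\lambda_{i,\nu}$ converges weakly to some $\lambda_i$; this is available by Prokhorov/compactness, or equivalently by a diagonal choice making every cylinder probability converge. Second, by a further diagonal extraction over $\ell\in\N$, using that $[q]$ is finite, the homes $h_\nu(x_{\nu,\ell})$ are eventually equal to some $h(\ell)$. Third, each $\rho_{i,\nu}(x_{\nu,\ell})$ converges.

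We then set $\Omega_i=\{0,1\}^{\N}$, $\mu_i=\lambda_i$, and
\[
 A_i(\ell)=\{\omega:\omega_\ell=1\}.
\]
Each $A_i(\ell)$ is a clopen cylinder, so weak convergence gives convergence of its measure, and more generally of the measure of any finite intersection $\bigcap_{\ell\in S}A_i(\ell)$, since these are also clopen cylinders. Hence
\[
 \rho_{i,\nu}(x_{\nu,\ell})=\lambda_{i,\nu}(A_i(\ell))\to\mu_i(A_i(\ell))=\rho_i(\ell).
\]

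The limit conditions follow by passing inequalities through the limit. For \ref{it:N1}, fix $\ell$; \ref{it:A1} gives $\sum_i\rho_{i,\nu}(x_{\nu,\ell})\ge q-1-\varepsilon_\nu$, and letting $\nu\to\infty$ gives $\sum_i\rho_i(\ell)\ge q-1$. For \ref{it:N2}, fix $S\in\binom{\N}{a}$. For large $\nu$ the vertices $x_{\nu,\ell}$, $\ell\in S$, are distinct elements of $X_\nu$, so \ref{it:A2} yields some $i_\nu$ with $\mu_{i,\nu}$-measure of the intersection at most $\varepsilon_\nu$. Since $[q]$ is finite, some fixed $i$ occurs as $i_\nu$ for infinitely many $\nu$. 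Cylinder convergence then gives
\[
 \mu_i\Bigl(\bigcap_{\ell\in S}A_i(\ell)\Bigr)=\lim\le\lim\varepsilon_\nu=0.
\]
This choice of $i$ may depend on $S$, which is harmless. For the home, $h_\nu(x_{\nu,\ell})=h(\ell)$ eventually means $\rho_{h(\ell),\nu}\le\rho_{j,\nu}$ for all $j$ along the tail, and the limit preserves the non-strict inequality, so $h(\ell)$ minimizes $\rho_i(\ell)$.

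The only point needing care is the compactness step: the subsequence must be chosen simultaneously for countably many cylinder events, for the homes, and for all $i\in[q]$. We handle this with a single diagonal extraction over an enumeration of the pairs consisting of an index $i$ and a finite subset of $\N$, interleaved with the indices $\ell$ for the homes. Each individual quantity lies in $[0,1]$ or in $[q]$, so Bolzano–Weierstrass applies at every stage. The consistent limits of cylinder probabilities then define $\mu_i$ by Kolmogorov extension, which avoids invoking Prokhorov explicitly. The ordering of $X_\nu$ by decreasing $e_\nu$ plays no role in this lemma; presumably it is used later, when transferring \ref{it:N3} and summability back to the finite systems.
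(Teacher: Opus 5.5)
Your proposal is correct and follows essentially the same route as the paper: a diagonal extraction makes all finite-dimensional incidence-pattern laws and all fixed homes converge, Kolmogorov extension then gives a measure on $\{0,1\}^{\N}$ with coordinate cylinders $A_i(\ell)$, and \ref{it:A1} and \ref{it:A2} pass to the limit (your pigeonhole on $i_\nu$ is just the paper's continuity of the minimum over $[q]$). The one detail worth stating explicitly is that, if you extract only along the all-ones probabilities $\mu_{i,\nu}(\bigcap_{\ell\in S}A_{i,\nu}(x_{\nu,\ell}))$, then by inclusion--exclusion every pattern probability on $\{0,1\}^L$ also converges, so the limits form genuinely consistent laws to which Kolmogorov extension applies.
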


\begin{proof}
For fixed $i$ and $L$, and for all sufficiently large $\nu$ with
$t_\nu\ge L$, define the incidence-pattern law on $\{0,1\}^L$ by
\[
 p_{i,\nu}^{(L)}(\sigma)
 =\mu_{i,\nu}\!\left(
 \left\{\omega:
 \ind_{A_{i,\nu}(x_{\nu,\ell})}(\omega)=\sigma_\ell
 \text{ for }\ell\in[L]\right\}\right).
\]
Each law lies in a finite-dimensional compact simplex.  A diagonal
subsequence makes every $p_{i,\nu}^{(L)}$ converge and makes every fixed
actual home stabilize.  The limiting laws are projectively consistent
because each finite law is the marginal of every larger one before taking
limits.  By the Kolmogorov extension theorem, for every $i$ they define a
Borel probability measure $\mu_i$ on $\Omega_i=\{0,1\}^{\N}$.  Let
\[
 A_i(\ell)=\{\omega\in\Omega_i:\omega_\ell=1\}.
\]
Cylinder probabilities converge, so in particular
$\rho_{i,\nu}(x_{\nu,\ell})\to\rho_i(\ell)$ for every fixed $i,\ell$.
Taking limits in \ref{it:A1} gives \ref{it:N1}.  If $S\subseteq\N$ has
order $a$, every common-intersection probability for the corresponding
fixed labels is a cylinder probability; the minimum over $[q]$ is
continuous.  Thus \ref{it:A2} gives \ref{it:N2}.  Finally, the stabilized
home is minimizing because
\[
 \rho_{h(\ell),\nu}(x_{\nu,\ell})=e_\nu(x_{\nu,\ell})
\]
and all coordinates converge.
\end{proof}

\begin{lemma}[Ordered impurity]\label{lem:orderedimpurity}
In the setting of \Cref{lem:fdlimit}, put
\[
 e(\ell)=\min_i\rho_i(\ell).
\]
Then
\[
 e_\nu(x_{\nu,\ell})\longrightarrow e(\ell)
 \quad\text{for each fixed }\ell,
\]
\[
 \sum_{\ell=1}^{\infty}e(\ell)<\infty,
\]
and
\begin{equation}\label{eq:movingtail}
 1\le\ell_\nu\le t_\nu,\quad \ell_\nu\to\infty
 \quad\Longrightarrow\quad
 e_\nu(x_{\nu,\ell_\nu})\to0.
\end{equation}
\end{lemma}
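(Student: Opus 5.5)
The plan is to get the first assertion from continuity, the summability from the product-measure argument already used in the proof of \Cref{thm:exactrounding} (applied now to the limiting system of \Cref{lem:fdlimit}), and the moving-tail statement from monotonicity of the labelling.

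\emph{Pointwise convergence.} For each fixed $\ell$ and $i$, \Cref{lem:fdlimit} gives $\rho_{i,\nu}(x_{\nu,\ell})\to\rho_i(\ell)$. The map $(y_1,\ldots,y_q)\mapsto\min_iy_i$ is continuous, so $e_\nu(x_{\nu,\ell})=\min_i\rho_{i,\nu}(x_{\nu,\ell})\to\min_i\rho_i(\ell)=e(\ell)$. Passing to the limit in $e_\nu(x_{\nu,1})\ge e_\nu(x_{\nu,2})\ge\cdots$ also shows that $e(1)\ge e(2)\ge\cdots$.

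\emph{Summability.} I will not try to get this uniformly in the finite systems. Condition \ref{it:A2} only bounds each $a$-set by $\varepsilon_\nu$, and there are far too many $a$-sets for a union bound to help. Instead I argue directly on the countable limiting system on $X=\N$, which satisfies \ref{it:N1} and \ref{it:N2} by \Cref{lem:fdlimit}.
\begin{enumerate}[label=(\roman*)]
\item Form the product space $\prod_i(\Omega_i,\mu_i)$ and the boxes $Q_\ell=A_1(\ell)\times\cdots\times A_q(\ell)$.
\item By \ref{it:N2}, any $a$ distinct boxes meet in a null set. There are countably many $a$-subsets of $\N$, so almost every point lies in at most $a-1$ boxes.
\item Tonelli's theorem then gives $\sum_\ell\prod_i\rho_i(\ell)\le a-1$, exactly as in \Cref{eq:productsum}.
\item By \ref{it:N1}, every non-minimal coordinate satisfies $\rho_j(\ell)\ge1-e(\ell)$. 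Hence $\prod_i\rho_i(\ell)\ge2^{-(q-1)}e(\ell)$ when $e(\ell)\le1/2$, and $\prod_i\rho_i(\ell)>2^{-q}$ when $e(\ell)>1/2$.
\end{enumerate}
Combining (iii) and (iv), only finitely many $\ell$ have $e(\ell)>1/2$, and therefore $\sum_\ell e(\ell)<\infty$.

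\emph{Moving tail.} Fix $\eta>0$. Summability forces $e(L)\to0$, so I choose $L$ with $e(L)<\eta/2$. By pointwise convergence, $e_\nu(x_{\nu,L})<\eta$ for all large $\nu$. Since $\ell_\nu\to\infty$, eventually $\ell_\nu\ge L$ (and $t_\nu\ge L$). The decreasing labelling then gives
\[
 0\le e_\nu(x_{\nu,\ell_\nu})\le e_\nu(x_{\nu,L})<\eta .
\]
This proves \Cref{eq:movingtail}.

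The only step with real content is the summability. The point to get right there is that it must be derived in the limit, using the null-intersection property \ref{it:N2}, rather than from the finite approximate hypothesis \ref{it:A2}. Once that is in place, the monotone ordering turns it into the uniform tail decay \Cref{eq:movingtail}.
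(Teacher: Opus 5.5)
Your proposal is correct and matches the paper's proof: continuity of the minimum gives pointwise convergence, the product-space and Tonelli argument from the exact rounding theorem, applied to the limiting system satisfying (N1) and (N2), gives summability, and the nonincreasing labelling gives the moving-tail statement. The only difference is cosmetic: you prove the moving tail directly by choosing $L$ with $e(L)<\eta/2$, whereas the paper assumes $e_\nu(x_{\nu,\ell_\nu})\ge\delta$ along a subsequence and derives $e(L)\ge\delta$ for all $L$, contradicting summability.
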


\begin{proof}
Fixed-label convergence follows from convergence of all $q$ coordinates and
continuity of the minimum.  The limiting system satisfies \ref{it:N1} and
\ref{it:N2}, so the product-space and Tonelli argument leading to
\Cref{eq:esummable} applies and gives the summability assertion.

If \Cref{eq:movingtail} failed, then along a subsequence
$e_\nu(x_{\nu,\ell_\nu})\ge\delta$ for some $\delta>0$.  For every fixed
$L$ and all large $\nu$, the ordering and $\ell_\nu\ge L$ would imply
$e_\nu(x_{\nu,L})\ge\delta$.  Hence $e(L)\ge\delta$ for every fixed $L$,
contradicting summability.
\end{proof}

\begin{proposition}[Escaping-vertex principle]\label{prop:escapingvertex}
In the finite systems above, let
$y_\nu=x_{\nu,\ell_\nu}$ with $\ell_\nu\to\infty$.  If the actual home of
$y_\nu$ is a fixed coordinate $i$ for all sufficiently large $\nu$, then,
for every $j\ne i$,
\[
 \rho_{j,\nu}(y_\nu)=1-o(1).
\]
\end{proposition}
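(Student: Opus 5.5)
By \Cref{lem:orderedimpurity}, specifically \Cref{eq:movingtail}, the hypothesis $\ell_\nu\to\infty$ together with $1\le\ell_\nu\le t_\nu$ gives
\[
 e_\nu(y_\nu)\longrightarrow0 .
\]
The plan is to feed this into the near-partition condition \ref{it:A1} and read off the remaining coordinates.

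By hypothesis, the actual home of $y_\nu$ is eventually the fixed coordinate $i$. So eventually
\[
 \rho_{i,\nu}(y_\nu)=e_\nu(y_\nu)=o(1).
\]
Apply \ref{it:A1} to $y_\nu$ in the $\nu$th system, and use $\rho_{i,\nu}(y_\nu)=e_\nu(y_\nu)$ as in the proof of \Cref{thm:exactrounding}:
\[
 \sum_{j\ne i}\bigl(1-\rho_{j,\nu}(y_\nu)\bigr)
 =(q-1)-\sum_{j\ne i}\rho_{j,\nu}(y_\nu)
 \le e_\nu(y_\nu)+\varepsilon_\nu .
\]
Each summand on the left is nonnegative, since each $\rho_{j,\nu}(y_\nu)$ is a probability. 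Hence every single term satisfies
\[
 0\le1-\rho_{j,\nu}(y_\nu)\le e_\nu(y_\nu)+\varepsilon_\nu .
\]
The right-hand side tends to $0$ because $\varepsilon_\nu\to0$ and $e_\nu(y_\nu)\to0$. This gives $\rho_{j,\nu}(y_\nu)=1-o(1)$ for every $j\ne i$.

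The only step needing care is the first: the moving-tail statement \Cref{eq:movingtail}. It is legitimate here because we work in the setting of \Cref{lem:fdlimit}, after passing to the subsequence. There the labels are ordered by decreasing impurity and the limiting impurities are summable. Along that subsequence the hypothesis on $\ell_\nu$ is exactly what \Cref{eq:movingtail} requires. After that, the argument is just the single inequality above, with $o(1)$ uniform over $j\ne i$, since there are only $q-1$ such $j$.
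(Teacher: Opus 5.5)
Your proof is correct and is essentially the paper's own argument. \Cref{eq:movingtail} gives $\rho_{i,\nu}(y_\nu)=e_\nu(y_\nu)=o(1)$, and then \ref{it:A1} bounds the sum of the nonnegative deficits $1-\rho_{j,\nu}(y_\nu)$, $j\ne i$, by $e_\nu(y_\nu)+\varepsilon_\nu=o(1)$.
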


\begin{proof}
By \Cref{eq:movingtail},
$\rho_{i,\nu}(y_\nu)=e_\nu(y_\nu)=o(1)$.  Hypothesis \ref{it:A1} gives
\[
 \sum_{j\ne i}\bigl(1-\rho_{j,\nu}(y_\nu)\bigr)
 \le e_\nu(y_\nu)+\varepsilon_\nu=o(1).
\]
Every summand is nonnegative, which proves the assertion.
\end{proof}

Before giving the formal definition, we explain the shadow construction.  A
bounded-rank finite obstruction may move partly or entirely out of every fixed
initial label set.  Positions whose labels stabilize become the visible shadow.
Positions whose labels diverge are safe to discard only when their actual home is
the same as the obstruction color; the moving-tail lemma then says that they are
asymptotically present in every other coordinate, so the finite blocker survives
on the stabilized part.

For each $i\in[q]$, define a shadow hypergraph $\cD_i$ on $\N$ as follows.
A finite set $E\subseteq\N$ is an edge if there are indices
$\nu_k\uparrow\infty$, an integer $r\le r_*$, and edges
$F_k\in\cC_{i,\nu_k}$ of order $r$ such that, on writing their labels as
\[
 \ell_{k,1}<\cdots<\ell_{k,r},
\]
for every position $p\in[r]$ the sequence $(\ell_{k,p})_k$ is either
eventually constant or tends to infinity.  The set of eventual constant
values must be exactly $E$, and whenever $\ell_{k,p}\to\infty$, the vertex
with that label must have actual home $i$ for all sufficiently large $k$.
The rank bound and repeated subsequence extraction make the
constant-or-divergent alternative available at every position; the
actual-home condition is an additional requirement for membership in
$\cD_i$.  For this definition alone, $E=\varnothing$ is provisionally
allowed; the next lemma shows that it cannot occur.
The collection of finite subsets of $\N$ is countable, and hence each shadow
hypergraph is countable.

\begin{lemma}[Shadow blockers]\label{lem:shadowblockers}
Every shadow hypergraph $\cD_i$ has rank at most $r_*$.  Every
$E\in\cD_i$ is nonempty and has a coordinate $j\ne i$ such that
\[
 \mu_j\left(\bigcap_{\ell\in E}A_j(\ell)\right)=0.
\]
\end{lemma}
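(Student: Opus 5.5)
The rank bound is immediate: an edge $E\in\cD_i$ is the set of eventual constant values of at most $r\le r_*$ positions, so $|E|\le r_*$. The real content is the blocker assertion, and nonemptiness will follow from it.

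Fix $E\in\cD_i$ together with its witnessing data: $\nu_k\uparrow\infty$, edges $F_k\in\cC_{i,\nu_k}$ of order $r$, a set $P_c\subseteq[r]$ of constant positions whose eventual labels form $E$, and the complementary divergent positions $P_\infty$. For each divergent position $p$, the vertices $y_k=x_{\nu_k,\ell_{k,p}}$ eventually have actual home $i$. Their labels tend to infinity, so \Cref{prop:escapingvertex} (along the subsequence, using \Cref{eq:movingtail}) gives $\rho_{j,\nu_k}(y_k)=1-o(1)$ for every $j\ne i$. Hypothesis \ref{it:A3} supplies, for each $k$, some $j_k\ne i$ with $\mu_{j_k,\nu_k}(\bigcap_{x\in F_k}A_{j_k,\nu_k}(x))\le\varepsilon_{\nu_k}$. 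Since $[q]$ is finite, we pass to a further subsequence on which $j_k=j$ is constant.

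We then bound the stabilized intersection by the full one plus the divergent losses. For large $k$ the constant positions carry exactly the labels in $E$, and a union bound gives
\[
 \mu_{j,\nu_k}\Bigl(\bigcap_{\ell\in E}A_{j,\nu_k}(x_{\nu_k,\ell})\Bigr)
 \le \varepsilon_{\nu_k}+\sum_{p\in P_\infty}\bigl(1-\rho_{j,\nu_k}(x_{\nu_k,\ell_{k,p}})\bigr)
 = o(1),
\]
because $|P_\infty|\le r_*$ is bounded. The left side is a cylinder probability in the fixed labels $E$. By the convergence of the incidence-pattern laws in \Cref{lem:fdlimit}, it converges to $\mu_j(\bigcap_{\ell\in E}A_j(\ell))$, which is therefore $0$. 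If $E=\varnothing$, the intersection is $\Omega_j$ by the stated convention, and the left side is $1$ for every $k$, which contradicts the bound $o(1)$. Hence $E\neq\varnothing$.

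The main obstacle is the bookkeeping for the divergent positions. We must check that the escaping-vertex principle applies along the subsequence $\nu_k$ with labels $\ell_{k,p}\to\infty$ and $\ell_{k,p}\le t_{\nu_k}$. We must also check that the home hypothesis in the definition of $\cD_i$ is exactly what makes the other coordinates nearly full, including the blocker coordinate $j$. This is where the actual-home requirement in the shadow definition is used essentially.
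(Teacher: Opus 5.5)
Your proof is correct and matches the paper's argument: pigeonhole on the blocker from \ref{it:A3} fixes one coordinate $j$; the escaping-vertex principle plus the rank bound makes the divergent positions cost only $o(1)$ in the union bound; cylinder-probability convergence passes to the limit; and nonemptiness follows from $\mu_j(\Omega_j)=1$. The subsequence bookkeeping you flag is harmless, since the moving-tail argument of \Cref{lem:orderedimpurity} works verbatim along any subsequence, and passing to a subsequence does not change the limiting laws.
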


\begin{proof}
The rank assertion is immediate.  Let $E\in\cD_i$ be represented by
$(\nu_k,F_k)$ as in the definition, and relabel this subsequence by $\nu$.
Choose a blocker $j_\nu\ne i$ from \ref{it:A3} and pass to a
subsequence on which $j_\nu=j$.  Write
\[
 F_\nu=E_\nu\mathbin{\dot\cup}Q_\nu,
\]
where $E_\nu$ comprises the eventually constant positions and $Q_\nu$ the
divergent positions.  For a divergent position $p$, let $y_{\nu,p}$ be its
vertex in $F_\nu$.  By \Cref{prop:escapingvertex},
$1-\rho_{j,\nu}(y_{\nu,p})=o(1)$.  The rank bound is used
precisely here: there are at most $r_*$ divergent positions, so the sum of
their $o(1)$ deficits is still $o(1)$.  Thus
\[
 \sum_{y\in Q_\nu}(1-\rho_{j,\nu}(y))=o(1).
\]
The inclusion
\[
 \bigcap_{x\in E_\nu}A_{j,\nu}(x)
 \subseteq
 \left(\bigcap_{x\in F_\nu}A_{j,\nu}(x)\right)
 \cup
 \bigcup_{y\in Q_\nu}
       (\Omega_{j,\nu}\setminus A_{j,\nu}(y))
\]
therefore gives
\[
 \mu_{j,\nu}\left(\bigcap_{x\in E_\nu}A_{j,\nu}(x)\right)
 \le\varepsilon_\nu+o(1).
\]
The stabilized labels are eventually exactly $E$, so the left side
converges to the corresponding cylinder probability.  Hence
\[
 \mu_j\left(\bigcap_{\ell\in E}A_j(\ell)\right)=0.
\]
If $E=\varnothing$, the left side is $\mu_j(\Omega_j)=1$, a contradiction.
Thus $E$ is nonempty.
\end{proof}

\begin{lemma}[Finite transfer]\label{lem:finitetransfer}
Suppose the limiting event system and the shadow hypergraphs admit a set
$Z\subseteq\N$, $|Z|\le a-1$, and a coloring
\[
 \phi:\N\setminus Z\to[q]
\]
with no $\cD_i$-edge monochromatic in color $i$, such that
$\phi(\ell)=h(\ell)$ for all sufficiently large $\ell$.  Then, for all
sufficiently large $\nu$, the finite counterexample sequence is actually
colorable after deleting at most $a-1$ vertices.
\end{lemma}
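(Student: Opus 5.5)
The plan is to build, for each large $\nu$, an explicit deletion set and coloring of $X_\nu$ by pulling back $\phi$ on a fixed initial segment of labels and using actual homes on the tail. If infinitely many of these colorings failed, I would extract a shadow edge from the failures that is monochromatic under $\phi$, contradicting the hypothesis.

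\emph{Construction.} Fix $M$ so large that $Z\subseteq[M]$ and $\phi(\ell)=h(\ell)$ for all $\ell>M$. Since $|X_\nu|=t_\nu\to\infty$, we have $t_\nu\ge M$ for all large $\nu$. For such $\nu$ put
\[
 Z_\nu=\{x_{\nu,\ell}:\ell\in Z\},
\]
so $|Z_\nu|\le a-1$. Color $x_{\nu,\ell}\notin Z_\nu$ by $\phi(\ell)$ when $\ell\le M$, and by its actual home $h_\nu(x_{\nu,\ell})$ when $\ell>M$. Let $X_{i,\nu}$ be the color classes. It suffices to show that for all large $\nu$ each $X_{i,\nu}$ is independent in $\cC_{i,\nu}$. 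That contradicts the assumption that the systems are counterexamples.

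\emph{Extraction.} Suppose instead that for infinitely many $\nu$ there are a color $i$ and an edge $F_\nu\in\cC_{i,\nu}$ with $F_\nu\subseteq X_{i,\nu}$. Pass to a subsequence on which $i$ is fixed and $|F_\nu|=r\le r_*$ is fixed. Write the labels of $F_\nu$ as $\ell_{\nu,1}<\cdots<\ell_{\nu,r}$. Using the rank bound and repeated extraction, pass to a further subsequence on which each position sequence $(\ell_{\nu,p})$ is either eventually constant or tends to infinity. Let $E$ be the set of eventual constant values.

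\emph{$E$ is a $\cD_i$-edge.} The only membership condition left to check concerns the divergent positions: whenever $\ell_{\nu,p}\to\infty$, that vertex must eventually have actual home $i$. Such a label eventually exceeds $M$, so the vertex is colored by its actual home. It lies in $X_{i,\nu}$, so that home is $i$. Hence $E\in\cD_i$, and by \Cref{lem:shadowblockers} $E$ is nonempty.

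\emph{$E$ is monochromatic in color $i$ under $\phi$.} The edge $F_\nu$ avoids $Z_\nu$, so $E\cap Z=\varnothing$ and $\phi$ is defined on all of $E$. Take $\ell\in E$. If $\ell\le M$, the vertex $x_{\nu,\ell}$ has color $\phi(\ell)$ and lies in $X_{i,\nu}$, so $\phi(\ell)=i$. If $\ell>M$, its color is $h_\nu(x_{\nu,\ell})$. By \Cref{lem:fdlimit} this equals $h(\ell)$ for all large $\nu$, and $h(\ell)=\phi(\ell)$ by the choice of $M$, so again $\phi(\ell)=i$. This contradicts the assumption that no $\cD_i$-edge is monochromatic in color $i$.

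\emph{Main obstacle.} The delicate point is that home stabilization $h_\nu(x_{\nu,\ell})\to h(\ell)$ holds only label by label, not uniformly in $\ell$. The argument handles this by never fixing the coloring through a limit on the tail. Tail vertices always use their actual homes, so divergent positions automatically meet the shadow-membership condition. Only the finitely many constant labels in $E$, for a fixed failing subsequence, need the eventual-home statement, and there it is applied to fixed labels.
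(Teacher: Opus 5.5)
Your proposal is correct and follows essentially the same route as the paper. The paper uses the same cutoff coloring ($\phi$ on labels up to a fixed threshold, actual homes beyond it) and the same extraction of a failing edge whose positions are either constant or divergent. It then runs the same three-case check to produce a nonempty $\cD_i$-edge that is monochromatic in color $i$ under $\phi$.
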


\begin{proof}
Choose $L$ so that $Z\subseteq[L]$ and
$\phi(\ell)=h(\ell)$ for every $\ell>L$.  For large $\nu$, delete the
vertices whose labels lie in $Z$.  Color every remaining label
$\ell\le L$ by $\phi(\ell)$ and every label $\ell>L$ by its actual home
$h_\nu(x_{\nu,\ell})$.

Suppose that this fails along an infinite subsequence.  Passing to a
subsequence, there are a fixed color $i$, a fixed edge order $r\le r_*$,
and edges $F_\nu\in\cC_{i,\nu}$ that are monochromatic in color $i$.  List
the labels increasingly and pass again so that every position is eventually
constant or tends to infinity.  Let $E$ be the stabilized label set.
Because the edge lies in the colored ground set, $E\cap Z=\varnothing$.
There are three cases for positions.  A stabilized label below or equal to
$L$ is colored by the fixed rule $\phi$ and hence has color $i$.  A
stabilized label above $L$ is eventually colored by its actual home; since
actual homes stabilize and $\phi=h$ above $L$, it also has $\phi$-color $i$.
A divergent label is eventually above $L$ and is colored by its actual home,
so its actual home is $i$.  Therefore $E\in\cD_i$ and every element of $E$
has color $i$.  By \Cref{lem:shadowblockers}, $E\ne\varnothing$,
contradicting the choice of $\phi$.
\end{proof}

\begin{proof}[Proof of \Cref{thm:compactness}]
Suppose no $\varepsilon_0$ works.  Then for each $\nu$ there is a finite
counterexample satisfying \ref{it:A1}--\ref{it:A3} with
$\varepsilon_\nu\le1/\nu$.  Pass first to a subsequence on which either
$|X_\nu|$ is bounded or $|X_\nu|\to\infty$; such a subsequence exists for
every sequence of nonnegative integers.  In the bounded case, pass again
on which $|X_\nu|=t$ and label every ground set by $[t]$.  There are only
finitely many rank-at-most-$r_*$ hypergraphs on $[t]$, so all
$\cC_{i,\nu}$ may be made constant.  Pass again so that every
incidence-pattern law on $\{0,1\}^t$ converges.  The limiting finite event
system satisfies the exact hypotheses \ref{it:N1}--\ref{it:N3}, because all
relevant common-intersection probabilities converge and the minima range
over finitely many coordinates.  Apply \Cref{thm:exactrounding}.  Its
partition is independent in the stabilized hypergraphs and therefore works
for every sufficiently large finite system, a contradiction.

We are therefore in the alternative $|X_\nu|\to\infty$.  Choose actual homes, order the
labels by nonincreasing impurity, and apply
\Cref{lem:fdlimit,lem:orderedimpurity}.  Construct the shadow hypergraphs.
By \Cref{lem:shadowblockers}, the limiting event system and shadows satisfy
all hypotheses of \Cref{thm:exactrounding}.  That theorem supplies $Z$ and
$\phi$ with the home property required in \Cref{lem:finitetransfer}.  The
finite transfer lemma then gives the desired finite colorings for all
sufficiently large $\nu$, contradicting the assumed counterexamples.
\end{proof}

\section{Stable reservoirs in a hypothetical counterexample}\label{sec:reservoirs}

\noindent\textit{Proof of \Cref{thm:main}.}
Put
\[
 a=m_1,
 \qquad b=m_2,
 \qquad q=k-1,
 \qquad H=K_{a,b},
 \qquad F=K_{m_1,\ldots,m_k}.
\]
For $k=2$, we have $F=H$, and the right-hand side is
$R(T,H)-1+a\ge R(T,H)$, so the assertion is immediate.  Henceforth assume
$q\ge2$.

Suppose the theorem is false.  By interchanging the two colors in the
hypothetical counterexamples if necessary, there is a sequence $n\to\infty$,
a sequence of $n$-vertex trees $T_n$, and red--blue colorings of
\[
 K_N,
 \qquad
 N=q(r-1)+a,
 \qquad
 r=R(T_n,H),
\]
containing neither a red copy of $F$ nor a blue copy of $T_n$.  We suppress
the sequence index.  To keep every later uniformity claim explicit, define
\[
 \sigma_n=\max\left\{
  \left|\frac rn-1\right|,
  \left|\frac Nn-q\right|
 \right\}.
\]
By \Cref{prop:efrs} and $N=q(r-1)+a$, we have
\begin{equation}\label{eq:rnasymp}
 \sigma_n\longrightarrow0,
 \qquad
 \left|\frac rn-1\right|\le\sigma_n,
 \qquad
 \left|\frac Nn-q\right|\le\sigma_n.
\end{equation}
By \Cref{cor:nearTuran}, the named deficit
\[
 \theta_n=\left(\frac{t_q(N)-e(\Gr)}{N^2}\right)_+
\]
satisfies $\theta_n\to0$, and
\begin{equation}\label{eq:nearTurancounter}
 e(\Gr)\ge t_q(N)-\theta_nN^2.
\end{equation}

\begin{lemma}[Clean reservoir decomposition]\label{lem:reservoirs}
Along the fixed counterexample sequence, there is a sequence
$\lambda_n\to0$, and there are pairwise disjoint sets $W_1,\ldots,W_q$ and
\[
 X=V(K_N)\setminus\bigcup_{i=1}^qW_i
\]
such that
\begin{align}
 (1-\lambda_n)n\le |W_i|&\le(1+\lambda_n)n
       &&(i\in[q]),\label{eq:Wsize}\\
 \delta(\Gb[W_i])&\ge |W_i|-\lambda_n n
       &&(i\in[q]),\label{eq:Wblue}\\
 d_{\Gb}(w,W_j)&\le\lambda_n n
       &&(w\in W_i,\ i\ne j),\label{eq:crossblue}\\
 \Gr[W_i]&\text{ is }H\text{-free},\label{eq:WHfree}\\
 |X|&\le\lambda_n n.\label{eq:Xsmall}
\end{align}
\end{lemma}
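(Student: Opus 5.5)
We obtain the decomposition by applying Erd\H{o}s--Simonovits stability to the red graph and then cleaning vertex by vertex. Along the fixed counterexample sequence, $\Gr$ is $F$-free with $\chi(F)=q+1$, $N\to\infty$, and by \Cref{eq:nearTurancounter} it satisfies $e(\Gr)\ge t_q(N)-\theta_nN^2$ with $\theta_n\to0$. So \Cref{prop:stability} applies to the sequence $(\Gr)$. It gives a partition $V(K_N)=V_1\mathbin{\dot\cup}\cdots\mathbin{\dot\cup}V_q$ with at most $\beta_nN^2$ red edges inside the parts, where $\beta_n\to0$; here $\beta_n$ is a named rate extracted from the $o(N^2)$ of the proposition along this sequence.

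\textbf{Sizes and cross density.} Since $e(\Gr)\le\sum_{i<j}|V_i||V_j|+\beta_nN^2$ and $e(\Gr)\ge t_q(N)-\theta_nN^2$, the standard convexity identity
\[
 \sum_{i<j}|V_i||V_j|=t_q(N)-\tfrac12\sum_i\bigl(|V_i|-N/q\bigr)^2+O_q(N)
\]
forces $\bigl||V_i|-N/q\bigr|=o(N)$ for every $i$. The same comparison shows that the number of blue cross pairs is at most $(\theta_n+\beta_n)N^2+O(N)=o(N^2)$. By \Cref{eq:rnasymp}, $N/q=n+o(n)$, so $|V_i|=n+o(n)$.

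\textbf{Cleaning.} Let $\gamma_n\to0$ bound both the internal red count and the blue cross count divided by $N^2$. Put $\lambda'_n=\gamma_n^{1/2}$. Call $w\in V_i$ \emph{bad} if it has more than $\lambda'_n n$ red neighbours in $V_i$, or more than $\lambda'_n n$ blue neighbours in some $V_j$ with $j\ne i$. By Markov's inequality there are $O_q(\gamma_nN^2/(\lambda'_nn))=O_q(\gamma_n^{1/2}n)=o(n)$ bad vertices. Set $W_i=V_i\setminus\{\text{bad}\}$ and let $X$ be the set of bad vertices; this gives \Cref{eq:Xsmall}, and also \Cref{eq:Wsize} after absorbing the $o(n)$ size errors. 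A kept vertex of $W_i$ has at most $\lambda'_nn$ red neighbours in $W_i$, so its blue degree in $W_i$ is at least $|W_i|-1-\lambda'_nn$; this gives \Cref{eq:Wblue}. A kept vertex also has at most $\lambda'_nn$ blue neighbours in each $W_j\subseteq V_j$, giving \Cref{eq:crossblue}. Take $\lambda_n$ to be the maximum of all these $o(1)$ rates, plus $1/n$.

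\textbf{$H$-freeness of each reservoir.} This is the only step using $F$-freeness beyond stability. Suppose $\Gr[W_1]$ contains a red $K_{a,b}=K_{m_1,m_2}$; the case of general $i$ is symmetric. We extend it greedily to a red $F$. The $q-1$ remaining parts have orders $m_3,\ldots,m_{q+1}$, and we place them in $W_2,\ldots,W_q$ respectively. At each stage fewer than $|F|$ vertices have been chosen, and by \Cref{eq:crossblue} each excludes at most $\lambda_nn$ vertices of $W_j$. So the common red neighbourhood in $W_j$ of the vertices already chosen (lying in other reservoirs) has order at least $|W_j|-|F|\lambda_nn\ge n/2>|F|$, and we can pick the next vertex there. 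Vertices placed in the same $W_j$ belong to the same part of $F$, so they need no edges among themselves. This produces a red $F$, a contradiction, and proves \Cref{eq:WHfree}.

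\textbf{Main obstacle.} The delicate point is bookkeeping rather than combinatorics. Every $o(\cdot)$ must be a single explicit sequence $\lambda_n\to0$ along the fixed counterexample sequence. This is handled by extracting the stability rate $\beta_n$ as a named sequence and by taking $\lambda_n$ to be the maximum of finitely many vanishing rates.
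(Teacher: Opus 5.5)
Your proposal is correct and follows the paper's route: stability for the red graph, the convexity identity to control part sizes and the blue cross count, degree-threshold cleaning with an averaging (Markov) bound, and greedy extension of a red $K_{a,b}$ to a red $F$ using the cross-blue bound. The only difference is that you clean in a single pass, taking internal red sparsity directly from the stability output. The paper instead first removes cross-blue-heavy vertices, proves $H$-freeness of the intermediate sets $U_i$, and invokes K\H{o}v\'ari--S\'os--Tur\'an before removing red-heavy vertices; that last step is redundant there, since $U_i\subseteq V_i$ already inherits the stability bound.
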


\begin{proof}
The red graph $\Gr$ is $F$-free and $\chi(F)=q+1$.  By
\Cref{prop:stability,eq:nearTurancounter}, there is a partition
\[
V(K_N)=V_1\mathbin{\dot\cup}\cdots\mathbin{\dot\cup}V_q
\]
for which the internal red-edge count divided by $N^2$ tends to zero.
Choose such partitions and set
\[
 \upsilon_n:=\frac1{N^2}\sum_{i=1}^q e_{\Gr}(V_i)\longrightarrow0.
\]
Let $P=\sum_{i<j}|V_i||V_j|$.  The red crossing-edge count is at least
$t_q(N)-(\theta_n+\upsilon_n)N^2$, while $P\le t_q(N)$.  Hence
\[
 0\le t_q(N)-P\le(\theta_n+\upsilon_n)N^2.
\]
The identity
\[
 t_q(N)-P
 =\frac12\sum_{i=1}^q\left(|V_i|-\frac Nq\right)^2+O_q(1)
\]
therefore gives
\[
 \max_{i\in[q]}\left|\frac{|V_i|}{n}-1\right|\longrightarrow0.
\]
It also follows that the normalized total number of blue edges crossing
between the parts tends to zero.

Write
\[
 \beta_n=\frac{1}{n^2}\sum_{i<j}e_{\Gb}(V_i,V_j)\longrightarrow0,
 \qquad
 \gamma_n=\sqrt{\beta_n}+n^{-1/2}.
\]
Delete every vertex having more than $\gamma_n n$ blue neighbors in some
other part, and call the remaining sets $U_i$.  By the union bound and the
handshake lemma, the number deleted is at most
\[
 \frac{2\sum_{i<j}e_{\Gb}(V_i,V_j)}{\gamma_n n}
 =\frac{2\beta_n}{\gamma_n}n,
\]
where $2\beta_n/\gamma_n\to0$.
Thus
\begin{equation}\label{eq:Ucross}
 d_{\Gb}(u,U_j)\le\gamma_n n
 \qquad(u\in U_i,\ i\ne j).
\end{equation}

We claim that $\Gr[U_i]$ is $H$-free.  If a red copy of $H$ lay in $U_i$,
use it for the first two classes of $F$ and greedily choose classes of
orders $m_3,\ldots,m_k$, one from each of the other reservoirs.  Copies are
not required to be induced, so no edge restrictions are imposed within an
individual multipartite class.  At every stage at most $C_F=|F|$ vertices
have already been chosen, and the next reservoir contains at least
\[
 |U_j|-C_F\gamma_n n
\]
vertices red-adjacent to all previously selected vertices.  Divided by $n$,
this quantity tends to $1$, so it exceeds the fixed required class size
$m_s$ for large $n$.  Hence the greedy construction succeeds and gives a
red $F$, a contradiction.

By the K\H{o}v\'ari--S\'os--Tur\'an theorem,
$e_{\Gr}(U_i)/n^2\to0$ uniformly in $i$.  Put
\[
 \alpha_n=\frac{1}{n^2}\sum_{i=1}^q e_{\Gr}(U_i)\longrightarrow0,
 \qquad
 \zeta_n=\sqrt{\alpha_n}+n^{-1/2}.
\]
Delete from every $U_i$ each vertex whose red degree inside $U_i$ exceeds
$\zeta_n n$, and call the remaining set $W_i$.  The total number deleted
in this second cleaning is at most
\[
 \frac{2\sum_i e_{\Gr}(U_i)}{\zeta_n n}
 =\frac{2\alpha_n}{\zeta_n}n,
\]
where $2\alpha_n/\zeta_n\to0$.  Consequently all four normalized error
quantities in the definition below tend to zero: indeed, deleting vertices
preserves \Cref{eq:Ucross}, and for $w\in W_i$,
\[
 d_{\Gb}(w,W_i)\ge |W_i|-1-\zeta_n n.
\]
After discarding finitely many terms of the counterexample sequence, every
$U_i$ and every $W_i$ is nonempty, because each has order $n-o(n)$.
Define
\[
 \lambda_n=\max\left\{
  \max_i\left|\frac{|W_i|}{n}-1\right|,
  \max_i\frac{|W_i|-\delta(\Gb[W_i])}{n},
  \max_{\substack{i\ne j\\w\in W_i}}\frac{d_{\Gb}(w,W_j)}n,
  \frac{|X|}{n}
 \right\},
\]
where the third maximum is $0$ if its indexing set is empty.  Then
$\lambda_n\to0$, and \Cref{eq:Wsize,eq:Xsmall} hold.  Deleting vertices cannot
create a red $H$, so \Cref{eq:WHfree} holds; it also preserves the
cross-blue bound, giving \Cref{eq:crossblue}.  Finally, for $w\in W_i$,
the displayed internal-degree bound proves \Cref{eq:Wblue}.
\end{proof}

For $x\in X$, define the normalized red profile
\[
 \rho_i(x)=\frac{d_{\Gr}(x,W_i)}{|W_i|}.
\]
Because $x\notin W_i$ and every edge $xw$ has exactly one color,
\begin{equation}\label{eq:profilecomplement}
 \rho_i(x)=1-\frac{d_{\Gb}(x,W_i)}{|W_i|}.
\end{equation}
Define
\[
 \pi_n=\sup_{x\in X}
 \left(\sum_{i=1}^q\frac{d_{\Gb}(x,W_i)}{|W_i|}-1\right)_+,
\]
with value $0$ when $X=\varnothing$.  We claim that $\pi_n\to0$.  Otherwise
there are a fixed $\omega'>0$, an infinite subsequence, and vertices $x\in X$
on that subsequence such that
\[
 \sum_{i=1}^q\frac{d_{\Gb}(x,W_i)}{|W_i|}\ge1+\omega'.
\]
By \Cref{eq:Wsize}, once $\lambda_n\le\omega'/(2+2\omega')$ this implies
\[
 \sum_{i=1}^q d_{\Gb}(x,W_i)
 \ge(1-\lambda_n)(1+\omega')n
 \ge(1+\omega'/2)n.
\]
Apply \Cref{lem:profile} with $\omega'/2$.  Its hypotheses follow from
\Cref{eq:Wsize,eq:Wblue} once $\lambda_n$ is at most the resulting $\gamma$
and $n$ is at least the resulting $n_0$.  This produces a blue copy of
$T_n$, a contradiction.  Hence $\pi_n\to0$.  Summing
\Cref{eq:profilecomplement} gives, for every $x\in X$,
\begin{equation}\label{eq:redprofile}
 \sum_{i=1}^q\rho_i(x)\ge q-1-\pi_n.
\end{equation}

\section{Obstruction hypergraphs}\label{sec:blockers}

Take $\Omega_i=W_i$ with the uniform probability measure $\mu_i$, and for
$x\in X$ put
\[
 A_i(x)=N_{\Gr}(x,W_i)\subseteq\Omega_i.
\]
Thus $\mu_i(A_i(x))=\rho_i(x)$.

For each $i\in[q]$, define a hypergraph $\cC_i$ on $X$.  A nonempty set
$E\subseteq X$ is an edge of $\cC_i$ if it is inclusion-minimal subject to
\begin{equation}\label{eq:obstructiondef}
 \Gr[W_i\cup E]\supseteq H.
\end{equation}
Since $\Gr[W_i]$ is $H$-free, every such edge is nonempty.  By minimality,
every red copy of $H$ witnessing \Cref{eq:obstructiondef} uses every vertex of
$E$: if a witnessing copy omitted $x\in E$, it would already lie in
$W_i\cup(E\setminus\{x\})$, contradicting inclusion-minimality.  Hence
\begin{equation}\label{eq:rank}
 |E|\le |H|=a+b.
\end{equation}

\begin{lemma}[$a$-set separation]\label{lem:asetblock}
Along the fixed counterexample sequence,
\[
 \sup_{S\in\binom Xa}\min_{i\in[q]}
 \mu_i\left(\bigcap_{x\in S}A_i(x)\right)\longrightarrow0,
\]
where the supremum is $0$ if $\binom Xa=\varnothing$.
\end{lemma}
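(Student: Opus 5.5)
Argue by contradiction: assume that along some subsequence there are a fixed $\beta>0$ and sets $S\in\binom Xa$ with
\[
 \Bigl|\bigcap_{x\in S}N_{\Gr}(x,W_i)\Bigr|\ge\beta|W_i|\ge\beta(1-\lambda_n)n
 \qquad\text{for every } i\in[q],
\]
using that $\mu_i$ is uniform on $W_i$. Write $B_i=\bigcap_{x\in S}N_{\Gr}(x,W_i)$. Every vertex of $B_i$ is red-adjacent to all $a=m_1$ vertices of $S$, and $S\subseteq X$ is disjoint from every reservoir. The plan is to build a red copy of $F=K_{m_1,\ldots,m_k}$ with $S$ as the class of order $m_1$ and the remaining classes of orders $m_2,\ldots,m_k$ taken one from each of $B_1,\ldots,B_q$. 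There are exactly $q=k-1$ remaining classes and $q$ reservoirs, so the counts match. This contradicts the red $F$-freeness of the counterexample colouring.

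To choose the classes, proceed greedily over $i=1,\ldots,q$, keeping a candidate set $B_i'\subseteq B_i$ of vertices red-adjacent to everything chosen so far. At the start $B_i'=B_i$. When a vertex $w\in W_j$ ($j\ne i$) is selected, \Cref{eq:crossblue} shows it removes at most $\lambda_n n$ vertices from $B_i'$ (its blue neighbours in $W_i$). Since at most $|F|$ vertices are ever selected, each $B_i'$ keeps at least $\beta(1-\lambda_n)n-|F|\lambda_n n$ vertices at every stage. This is positive and exceeds $m_k$ once $\lambda_n\to0$ and $n$ is large. From $B_i'$, choose any $m_{i+1}$ vertices as the $(i+1)$st class. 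Since copies need not be induced, no condition is imposed inside a class. Edges between $S$ and each chosen class are red because the class lies in $B_i$. Edges between different chosen classes are red by the greedy maintenance. The result is a red $F$, giving the contradiction.

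It remains to handle the degenerate case and to make ``tends to zero'' precise. If $|X|<a$, the supremum is $0$ by convention. Otherwise, if the supremum does not tend to $0$, extract the subsequence and $\beta$ as above. There is no substantial obstacle. The only point needing care is that the cross-blue bound \Cref{eq:crossblue} holds for every reservoir vertex and not just on average, and \Cref{lem:reservoirs} provides exactly that. Consequently the loss in the greedy step is $O(|F|\lambda_n n)=o(n)$, against a candidate pool of linear order $\beta n$.
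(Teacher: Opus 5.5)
Your proposal is correct and follows the paper's own proof essentially step for step. Both argue by contradiction along a subsequence, use $S$ as the class of order $m_1$, and greedily choose the classes of orders $m_2,\ldots,m_k$ from the common red neighbourhoods in $W_1,\ldots,W_q$. In both, the per-vertex bound \Cref{eq:crossblue} limits the loss to at most $|F|\lambda_n n$, against a candidate pool of linear order, which yields a red $F$.
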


\begin{proof}
Otherwise, along a subsequence there are a fixed $\delta>0$ and $a$-sets
$S$ whose common red-neighborhood density is at least $\delta$ in every
reservoir.  Use $S$ as the first class of $F$, choose the $b$-vertex second
class from its common red neighborhood in $W_1$, and then choose classes of
orders $m_3,\ldots,m_k$ successively from $W_2,\ldots,W_q$.  At a later
stage, at most $C_F=|F|$ reservoir vertices have already been chosen.  The
next reservoir therefore has at least
\[
 \delta|W_j|-C_F\lambda_n n.
\]
These candidates are red-adjacent to all previously selected vertices and to
every vertex of $S$.  Since $|W_j|/n\to1$ and $\lambda_n\to0$, this exceeds
the fixed required class size for large $n$.
The greedy selection succeeds and gives a red $F$.
\end{proof}

\begin{lemma}[Obstruction blocking]\label{lem:obstructionblock}
Along the fixed counterexample sequence,
\[
 \max_{i\in[q]}\sup_{E\in\cC_i}\min_{j\ne i}
 \mu_j\left(\bigcap_{x\in E}A_j(x)\right)\longrightarrow0,
\]
where the supremum is $0$ when $\cC_i=\varnothing$.
\end{lemma}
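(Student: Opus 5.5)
Suppose, for a contradiction, that the statement fails.  Then along a subsequence there are a fixed $\delta>0$, a fixed color $i$, and an edge $E\in\cC_i$ such that, for every $j\ne i$,
\[
 \mu_j\Bigl(\bigcap_{x\in E}A_j(x)\Bigr)\ge\delta.
\]
Here $E$ has order at most $a+b$ by \Cref{eq:rank}.  In words, for every $j\ne i$ the common red neighborhood of $E$ in $W_j$ has order at least $\delta|W_j|$.  The plan is to build a red $F$, contradicting the counterexample assumption, in the same way as in \Cref{lem:asetblock}.

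First, fix a red copy $H_0$ of $H=K_{a,b}$ in $\Gr[W_i\cup E]$, which exists by \Cref{eq:obstructiondef}.  By minimality, $H_0$ uses every vertex of $E$, and its remaining vertices lie in $W_i$.  We take $H_0$ as the first two classes of $F$.

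Next we choose the classes of orders $m_3,\ldots,m_k$ greedily, one from each $W_j$ with $j\ne i$; there are exactly $q-1=k-2$ such reservoirs.  Each chosen vertex must be red-adjacent to all vertices of $H_0$ and to all previously chosen vertices outside its own reservoir.  We draw candidates in $W_j$ from the common red neighborhood of $E$, which has order at least $\delta|W_j|$.  From this set we remove two kinds of vertices:
\begin{itemize}
\item those with a blue edge to one of the at most $a+b$ vertices of $H_0\cap W_i$;
\item those with a blue edge to one of the at most $|F|$ vertices already chosen in other reservoirs.
\end{itemize}
By \Cref{eq:crossblue}, each such vertex has at most $\lambda_n n$ blue neighbors in $W_j$.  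So the surviving candidate set has order at least
\[
 \delta|W_j|-2|F|\lambda_n n .
\]
Since $|W_j|/n\to1$ by \Cref{eq:Wsize} and $\lambda_n\to0$, this exceeds $m_k$ for all large $n$.  Copies need not be induced, so nothing is required inside a class.

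The only point needing care is that the vertices of $E$ lie in $X$, not in a reservoir, so \Cref{eq:crossblue} gives no control over their blue degrees.  This is exactly where the hypothesis on $\mu_j(\bigcap_{x\in E} A_j(x))$ is used: it supplies red adjacency from all of $E$ into each $W_j$ with $j\ne i$.  The vertices of $H_0$ inside $W_i$ are handled by \Cref{eq:crossblue}, and the fact that $E$ has bounded order does the rest.  The greedy construction therefore produces a red $F$, a contradiction.
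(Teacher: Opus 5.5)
Your proposal is correct and follows essentially the same route as the paper. You take a red witness $H_0\cong H$ in $W_i\cup E$, which uses all of $E$ by minimality, use it as the first two classes of $F$, and extend greedily through the reservoirs $W_j$ ($j\ne i$) inside the common red neighborhood of $E$, controlling the reservoir vertices via \Cref{eq:crossblue}; your candidate bound $\delta|W_j|-2|F|\lambda_n n$ is slightly looser than the paper's $\delta|W_j|-|F|\lambda_n n$ but equally sufficient.
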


\begin{proof}
Otherwise, along a subsequence there are a fixed $\delta>0$, indices $i$,
and minimal obstructions $E\in\cC_i$ whose common red-neighborhood density is
at least
$\delta$ in every $W_j$ with $j\ne i$.  Let $Q\cong H$ be a red witness in
$W_i\cup E$.  By inclusion-minimality of $E$, the witness uses every vertex
of $E$: if it omitted some $x\in E$, the same copy would lie in
$W_i\cup(E\setminus\{x\})$, contradicting minimality.  Hence
$Q\cap E=E$.  Every vertex of $Q\cap W_i$ has at most $\lambda_n n$ blue neighbors in
every other reservoir by \Cref{eq:crossblue}.  Thus all vertices of $Q$
have a common red neighborhood of order at least
$\delta|W_j|-|H|\lambda_n n$ in each $W_j$, $j\ne i$.  At each greedy
extension step the available common red candidate set has size at least
$\delta |W_j|-C_F\lambda_n n$.  Divided by $n$, this is at least
$\delta(1-\lambda_n)-C_F\lambda_n\to\delta$, so it exceeds the next fixed
class size.  Greedily choosing the remaining
$k-2$ classes of $F$ gives a red $F$, a contradiction.
\end{proof}

For the fixed counterexample sequence, define
\[
 \eta_n^{(1)}=
 \sup_{x\in X}\left(q-1-\sum_{i=1}^q\rho_i(x)\right)_+,
\]
\[
 \eta_n^{(2)}=
 \sup_{S\in\binom Xa}
 \min_{i\in[q]}\mu_i\left(\bigcap_{x\in S}A_i(x)\right),
\]
and
\[
 \eta_n^{(3)}=
 \max_{i\in[q]}\sup_{E\in\cC_i}
 \min_{j\ne i}\mu_j\left(\bigcap_{x\in E}A_j(x)\right),
\]
where a supremum over an empty family is $0$.  By
\Cref{eq:redprofile,lem:asetblock,lem:obstructionblock},
\[
 \eps_n:=\max\{\eta_n^{(1)},\eta_n^{(2)},\eta_n^{(3)}\}\longrightarrow0.
\]
Let $\varepsilon_0$ be the constant in \Cref{thm:compactness} for
$r_*=a+b$.  For all sufficiently large $n$ we have $\eps_n\le\varepsilon_0$.
The systems $(\Omega_i,\mu_i,A_i(x))$ and the obstruction hypergraphs $\cC_i$
therefore satisfy \Cref{thm:compactness} with rank bound $r_*=a+b$.

\section{Final capacity contradiction}\label{sec:mainproof}

Apply \Cref{thm:compactness}.  For all sufficiently large members of the
counterexample sequence, there are $Z\subseteq X$, $|Z|\le a-1$, and a
partition
\[
 X\setminus Z=X_1\mathbin{\dot\cup}\cdots\mathbin{\dot\cup}X_q
\]
such that $X_i$ is independent in $\cC_i$.

We claim that
\begin{equation}\label{eq:classHfree}
 \Gr[W_i\cup X_i]\text{ is }H\text{-free}
 \qquad(i\in[q]).
\end{equation}
Indeed, a red copy of $H$ in $W_i\cup X_i$ would use a nonempty set of
vertices from $X_i$, because $\Gr[W_i]$ is $H$-free.  Among the vertices of
the copy lying in $X_i$, choose an inclusion-minimal set $E$ for which
$W_i\cup E$ contains
a red $H$.  Then $E\in\cC_i$ and $E\subseteq X_i$, contradicting
independence.

Since
\[
 r=R(T_n,H)=R(H,T_n),
\]
every complete graph on $r$ vertices contains a red $H$ or a blue $T_n$.
By \Cref{eq:classHfree} and the global absence of a blue $T_n$,
\[
 |W_i\cup X_i|\le r-1
 \qquad(i\in[q]).
\]
The $q$ sets cover all vertices except $Z$, so
\[
 N-|Z|=\sum_{i=1}^q|W_i\cup X_i|\le q(r-1).
\]
On the other hand,
\[
 N-|Z|\ge q(r-1)+a-(a-1)=q(r-1)+1,
\]
a contradiction.  This proves \Cref{thm:main}.\hfill\qedsymbol

\section{Concluding remarks}

The proof uses the complement restriction in the off-Tur\'an theorem before
any arbitrary-tree embedding is attempted.  This reduces the relevant
reduced graph to one with small independence number; a head edge, an
almost-spanning matching, balanced portions of the matched clusters, and the
whole-edge/stateful embedding argument then suffice.
The remaining Ramsey argument is separated into a stability stage and the
null-blocker compactness theorem.

The compactness theorem is formulated independently of Ramsey theory.  Its
shadow construction is needed because bounded-rank obstructions may retain
no fixed labels along a limiting sequence.  Ordering vertices by their
minimum event measure forces every escaping vertex to be asymptotically
complete in every nonhome coordinate and prevents such an obstruction from
vanishing.

\section*{Formal verification}

The complete, unconditional resolution of Erd\H{o}s Problem~550 has been fully
formalised and machine-verified in Lean, using toolchain v4.28.0 and Mathlib
v4.28.0.  The exported
declaration \texttt{Erdos550.erdos\_550} formalises exactly the quantified
content of \Cref{thm:main}: it covers every fixed number of parts, every
nondecreasing sequence of positive part sizes, and every sufficiently large
tree, with no unproved mathematical hypothesis.  Its dependency closure
consists of 134 Lean files and contains no \texttt{sorry} or user-declared
axiom.
The axiom audit in \texttt{MainAudit.lean} reports only the standard Mathlib
foundation \texttt{propext}, \texttt{Classical.choice}, and
\texttt{Quot.sound}.  The formal proof includes the direct off--Tur\'an
embedding theorem, exact regularity and reduced-graph accounting,
parity-refined tree data, whole-edge allocation, the stateful packed
regular-pair embedding, stability and reservoir reduction, bounded-rank
obstruction compactness, and the final Ramsey-capacity argument.  Complete
sources, the declaration-level proof map, and reproduction instructions are
included in the accompanying repository~\cite{Erdos550Lean}.

\section*{Use of artificial intelligence tools}

Large language models, primarily OpenAI's ChatGPT and Codex, were used
extensively throughout the research.  The author originated the ideas and
research directions, while the models contributed substantially to the
technical development of the work: they were used to explore and further
develop the ideas, produce technical lemmas and calculations, and generate
and debug code.  The author critically evaluated outputs from different model
instances, selected and synthesised promising elements, and discarded or
corrected unsuccessful or flawed suggestions.  The Lean formalisation was
produced with Harmonic's Aristotle system and ChatGPT's Codex, directed and
audited by the author.  All strategic decisions were made by the author, who
takes full responsibility for the mathematical correctness of the final
results.

\end{document}